\documentclass[a4paper,10pt]{article}

\usepackage{amsmath,amssymb,amsthm,a4wide}
\usepackage{setspace} 

\pagestyle{myheadings}

\newtheorem{thm}{\bf Theorem}[section]
\newtheorem{Def}[thm]{\bf Definition}
\newtheorem{Lem}[thm]{\bf Lemma}
\newtheorem{Pro}[thm]{\bf Proposition}
\newtheorem{coro}[thm]{\bf Corollary}
\newtheorem{ex}{\bf Example}

\newcommand{\Ric}{{\text{Ric}}}
\newcommand{\R}{\mathbb{R}}
\newcommand{\N}{\mathbb{N}}
\newcommand{\s}{\mathbb{S}}
\newcommand{\D}{\mathbb{D}}
\newcommand{\Vol}{{\text{Vol}}}
\newcommand{\tphi}{{\tilde{\phi}}}
\newcommand{\diam}{{\text {diam}}}
\newcommand{\skel}{{\text {skel}}}
\newcommand{\del}{{\partial}}

\title{Volume growth and the topology of pointed Gromov-Hausdorff limits}
\author{M. Munn}

\begin{document}

\maketitle

\begin{abstract}

In this paper we examine topological properties of pointed metric measure spaces $(Y, p)$ that can be realized as the pointed Gromov-Hausdorff limit of a sequence of complete, Riemannian manifolds $\{(M^n_i, p_i)\}_{i=1}^{\infty}$ with nonnegative Ricci curvature. Cheeger and Colding \cite{ChCoI} showed that given such a sequence of Riemannian manifolds it is possible to define a measure $\nu$ on the limit space $(Y, p)$. In the current work, we generalize previous results of the author to examine the relationship between the topology of $(Y, p)$ and its volume growth. Namely, given constants $\alpha(k,n)$ which were computed in \cite{Munn} and based on earlier work of G. Perelman, we show that if $\lim_{r \to \infty} \frac{\nu(B_p(r))}{\omega_n r^n} > \alpha(k,n)$, then the $k$-th group of $(Y,p)$ is trivial. The constants $\alpha(k,n)$ are explicit and depend only on $n$, the dimension of the manifolds $\{(M^n_i, p_i)\}$, and $k$, the dimension of the homotopy in $(Y,p)$.
\end{abstract}

\maketitle

\section{Introduction}

The Gromov-Hausdorff limits of Riemannian manifolds with Ricci curvature lower bounds have become an important focus in the modern study of Reimannian geometry. In their joint work, Cheeger and Colding proved a number of substantial geometric properties and regularity results describing the nature of these limit spaces \cite{ChCoI, ChCoII, ChCoIII}. In particular, they show that the limit space $Y$ is in fact a metric space equipped with a measure $\nu$ satisfying the  Bishop-Gromov Volume Comparison Theorem originally stated for Riemannian manifolds (see Theorem \ref{Thm-classic Bishop Gromov}). That is, assuming nonnegative Ricci curvature in the sequence, for all $z \in Y$ and $0< r_1 \leq r_2$,
\begin{equation}\label{renormalized vol comparison}
\frac{\nu(B_z(r_1))}{\nu(B_z(r_2))} \geq \left(\frac{r_1}{r_2}\right)^n.
\end{equation}
Letting $\omega_n$ denote the volume of an $n$-dimensional unit ball in Euclidean space, it follows that $\lim_{r \to \infty} \dfrac{\nu(B_z(r))}{\omega_n r^n} \leq 1$. Here we study the topology of such limit spaces with an additional condition of Euclidean volume growth in the limit, that is $\lim_{r \to \infty} \dfrac{\nu(B_z(r))}{\omega_n r^n} > 0$. We show that the topology of the limit space simplifies tremendously when the volume growth of $Y$ is carefully restricted.

Generally speaking, the Gromov-Hausdorff distance defines a very weak form of convergence. While in dimensions 1 and 2 the topology is somewhat well behaved, in higher dimensions very little can be said even when additional geometric constraints are assumed for the sequence $M^n_i$. For example, in \cite{Men} Menguy constructs a sequence of 4-dimensional manifolds each with positive Ricci curvature and Euclidean volume growth but whose Gromov-Hausdorff limit possesses infinite topological type within balls of arbitrarily small radius. In the positive direction, Anderson \cite{And1} showed that the limit is in fact a $C^{1, \alpha}$ manifold assuming a two sided Ricci curvature bound and a uniform lower bound on the injectivity radius of the sequence. Later, Sormani-Wei \cite{SoWei2} showed that when the sequence has a uniform lower bound on the Ricci curvature the limit space admits a universal cover and, in fact, if the sequence is additionally simply connected then the limit space is its own universal cover. Further work of Ennis-Wei \cite{EnnWei} describes the nature of the universal cover when the limit space is compact. 

The results of Sormani-Wei \cite{SoWei2} can be also be used to extend another theorem of Anderson's \cite{And2} comparing the volume growth of a Riemannian manifold to the size of its fundamental group. They show that if the volume growth of the sequence is at least half that of Euclidean space then the universal cover of the limit is the space itself. However, this does not imply that the limit is simply connected. In fact, it remains an open question whether this condition on volume growth implies simply connectedness in the limit or not (see Remark 2).

Here we prove a partial result in this direction. We give precise bounds $\alpha(1,n)$, see Table \ref{Table-alphas}, for the volume growth of $M^n_i$ such that the following theorem holds

\begin{thm}\label{Thm--simply connected limit}
Let $\{(M^n_i, p_i)\}$ be a sequence of complete, $n$-dimensional Riemannian manifolds with $\Ric_{M_i} \geq 0$. If (passing to a subsequence if necessary)
\begin{equation}
 \lim_{r \to \infty} \frac{\Vol_{M_i}(B_{p_i}(r))}{\omega_n r^n} > \alpha(1,n),
\end{equation}
then the pointed Gromov-Hausdorff limit is simply connected.
\end{thm}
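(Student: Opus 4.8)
The plan is to prove the contrapositive via a volume-growth obstruction to short geodesic loops, following Perelman's original strategy as generalized by the author. The core idea is that a nontrivial element of $\pi_1(Y,p)$ would force a \emph{deficit} in the volume growth: a noncontractible loop behaves like a "topological singularity" that, when lifted to the universal cover (which exists by Sormani--Wei \cite{SoWei2}), reduces the amount of volume available at infinity below the threshold $\alpha(1,n)$. So I would assume $\lim_{r\to\infty}\frac{\Vol_{M_i}(B_{p_i}(r))}{\omega_n r^n} > \alpha(1,n)$ and seek to show any loop in $Y$ is contractible.

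First I would set up the limit-space machinery: by Gromov precompactness the sequence subconverges to a pointed limit $(Y,p)$ with the renormalized measure $\nu$ of Cheeger--Colding satisfying \eqref{renormalized vol comparison}, and crucially the volume-growth hypothesis passes to the limit so that $\lim_{r\to\infty}\frac{\nu(B_p(r))}{\omega_n r^n} > \alpha(1,n)$. By Sormani--Wei, $Y$ admits a universal cover $\tilde Y$, and $\pi_1(Y,p)$ acts on it by deck transformations; since the $M^n_i$ are simply connected in the relevant sense, the covering group is a limit of the covering groups upstairs. Next, supposing for contradiction that $\pi_1(Y,p)$ is nontrivial, I would select a nontrivial deck transformation $g$ and consider its displacement function, producing in $\tilde Y$ a configuration of points translated by $g$ that are pairwise separated by the action. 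The $k=1$ case of the constant $\alpha(1,n)$ from \cite{Munn} is precisely the Perelman-type excess/volume bound associated to this single loop.

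The heart of the argument is the volume-excess estimate. I would use the Bishop--Gromov comparison \eqref{renormalized vol comparison} together with an excess-function argument (à la Abresch--Gromoll and Perelman): the existence of the nontrivial loop places the translated points $p$ and $gp$ at a definite distance while forcing the geodesics between them to have bounded excess, and the interplay of these constraints carves out regions whose volumes must be subtracted from $\omega_n r^n$. Summing these volume deficits over the fundamental domain of the $\mathbb{Z}$-action (or the relevant subgroup) generated by $g$ yields a strict upper bound $\lim_{r\to\infty}\frac{\nu(B_p(r))}{\omega_n r^n} \le \alpha(1,n)$, contradicting the hypothesis. The explicit value of $\alpha(1,n)$ is exactly the ratio recorded in Table \ref{Table-alphas} that makes this deficit argument tight for $k=1$.

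The main obstacle I anticipate is twofold. First, the excess and volume estimates from \cite{Munn} and Perelman are stated for smooth manifolds, so I must verify they transfer faithfully to the singular limit $(Y,p)$ and its universal cover; this requires that Cheeger--Colding's measure $\nu$ genuinely satisfies the comparison and segment inequalities needed for the excess estimate, and that geodesics, midpoints, and the relevant ball volumes behave well under the Gromov--Hausdorff convergence. Second, I must control the deck-transformation group carefully: $\pi_1(Y,p)$ need not be finitely generated or even discrete in an obvious sense, so reducing to a single generator $g$ and arguing that its presence alone forces the volume deficit (rather than needing the full group) is delicate. I expect to handle this by choosing $g$ to realize the smallest nontrivial displacement at $p$ and showing that this minimal loop, lifted upstairs, already violates the threshold, so that the full structure of $\pi_1(Y,p)$ is not needed.
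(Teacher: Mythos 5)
There is a genuine gap: your core mechanism is the one the paper explicitly rules out. You propose to lift a nontrivial element of $\pi_1(Y,p)$ to a deck transformation $g$ of the universal cover $\tilde{Y}$ and run an Anderson-style volume-deficit count over a fundamental domain. This fails at both ends. First, $\tilde{Y}$ is not known to be a Gromov--Hausdorff limit of manifolds with $\Ric \geq 0$, so neither the Cheeger--Colding comparison (\ref{renormalized vol comparison}) nor any Abresch--Gromoll-type excess estimate is available on it; the paper notes precisely that Anderson's argument \cite{And2} ``cannot be extended a priori to a Gromov-Hausdorff limit'' for this reason. Second, and more fatally, under your own volume-growth hypothesis the Sormani--Wei result \cite{SoWei2} gives that $Y$ \emph{is} its own universal cover, so the deck group is trivial even when $\pi_1(Y,p)$ is not --- the surjection from $\pi_1$ onto the deck group can kill everything, as in the spherical suspension of the Hawaiian earring (Example \ref{Ex-Hawaii ring}), which is its own universal cover yet not simply connected. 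So there is no nontrivial $g$ to select, no minimal displacement to exploit, and the contradiction you aim for never gets off the ground. Relatedly, $\alpha(1,n)$ is not a ``deficit'' constant tied to a single loop; it is $\inf_{c>1}\beta(1,c,n)$ with $\beta(1,c,n) = 1-\gamma(c,h^{-1}_{1,n}(c),n)$, arising from an entirely different construction.

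The paper's actual route avoids covering spaces altogether: it contracts loops directly. Theorem \ref{Thm--simply connected limit} is the $k=1$ case of Theorem \ref{Thm--Main limit theorem}, whose proof (i) passes the volume bound to the limit measure $\nu$ via noncollapsing, as you correctly do; (ii) generalizes Perelman's Maximal Volume Lemma (Lemma \ref{Prop-General Perelman}) to produce long, well-placed \emph{limit} geodesics in $(Y,p)$, and the Abresch--Gromoll excess estimate (Lemma \ref{Prop-General AbGr}) for thin triangles built from such limit geodesics; (iii) feeds these into the Moving In Lemma for GH limits (Proposition \ref{Lemma-Moving In for limit}); and (iv) iterates the Moving In Lemma through the cell decompositions of the Homotopy Construction Theorem (Theorem \ref{Theorem-HomotopyConstruction}), which holds in any complete, locally compact length space, to extend any $f:\s^1 \to Y$ continuously over $\D^{2}$. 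If you want a correct proof, you should redirect your effort from controlling deck transformations to verifying that the two Riemannian inputs (maximal-volume geodesic placement and the excess estimate) survive the limit --- which is exactly where the paper does its work.
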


In general, one cannot assume that the Gromov-Hausdorff limit of a sequence of manifolds with $\pi_k(M^n_i) = 0$ has trivial $k$-th homotopy group. Taking capped cylinders and moving the base point to infinity through the Gromov-Hausdorff convergence produces a cylinder in the limit (see Example 3.\ref{Ex-limit cylinder}). Although the elements of this sequence are simply connected the limit is certainly not. However, Theorem \ref{Thm--simply connected limit} shows that this property does in fact hold provided the volume growth is sufficiently large for the manifolds throughout the sequence. 

In fact, Theorem \ref{Thm--simply connected limit} is a consequence of a more general theorem relating the volume growth of the limit space and {\em any} of its $k$-th homotopy groups. Furthermore, the actual dependence on volume growth occurs not on the $M^n_i$ in the sequence, but only in the volume growth of their Gromov-Hausdorff limit (as illustrated by Example 3.\ref{Ex-MenguyA}). We find constants $\alpha(k,n)$, depending only on the dimension of the maniofolds in the sequence and on $k$ the dimension of the homotopy, such that

\begin{thm}\label{Thm--Main limit theorem}
Let $(Y, p)$ be the pointed metric measure limit of a sequence of Riemannian manifolds $\{(M^n_i, p_i)\}$ all of whose Ricci curvatures is nonnegative, $\Ric_{M^n_i} \geq 0$, and let $\nu$ denote the renormalized limit measure of $Y$. If 
\begin{equation}
\lim_{r \to \infty} \frac{\nu(B_p(r))}{\omega_n r^n} > \alpha(k,n),
\end{equation}
then $\pi_k(Y, p) = 0$.
\end{thm}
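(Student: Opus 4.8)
The plan is to argue by contradiction, first converting the single asymptotic hypothesis into a lower bound on the volume ratio at \emph{every} scale from \emph{every} basepoint. The renormalized comparison \eqref{renormalized vol comparison} says precisely that $r \mapsto \nu(B_z(r))/(\omega_n r^n)$ is non-increasing, so for each fixed $z$ its limit as $r \to \infty$ equals its infimum, and a standard triangle-inequality squeeze (a shift of basepoint being negligible at large scale) shows this limit is independent of $z$. Writing $V_Y := \lim_{r \to \infty} \nu(B_p(r))/(\omega_n r^n)$, the hypothesis $V_Y > \alpha(k,n)$ thus upgrades to
\begin{equation}
\frac{\nu(B_z(r))}{\omega_n r^n} \;>\; \alpha(k,n) \qquad \text{for all } z \in Y \text{ and } r > 0 .
\end{equation}
This uniform, all-scales volume lower bound is the only quantitative input I expect the argument to consume.

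Now suppose, for contradiction, that $\pi_k(Y,p) \neq 0$, and represent a nontrivial class by a continuous map $f \colon \s^k \to Y$ with image in some ball $B_p(R)$. Because the hypothesis constrains the volume growth of the limit $Y$ and \emph{not} that of the approximating manifolds $M^n_i$, I would not reduce to a contraction on the $M^n_i$; instead I would contract $f$ intrinsically in $Y$ by the scale-by-scale scheme of Perelman, quantified for the $k$-th homotopy group in \cite{Munn}. As a Gromov--Hausdorff limit of length spaces, $Y$ is a complete, locally compact geodesic space, so minimizing geodesics and midpoints exist; together with \eqref{renormalized vol comparison} these are exactly the ingredients Perelman's construction requires. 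Concretely, I would contract $f$ by induction on scale: cover its image by balls of a controlled radius, use \eqref{renormalized vol comparison} to bound the packing and overlap multiplicity of the cover, and at each stage fill the $j$-spheres that arise ($j \le k$) within balls of comparable radius. The number $\alpha(k,n)$ is exactly the volume-ratio threshold guaranteeing that every such filling up through dimension $k$ succeeds; hence $f$ bounds a disk, the class is trivial, and we contradict the choice of $f$, forcing $\pi_k(Y,p) = 0$.

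The step I expect to be the main obstacle is the local filling lemma --- that a sufficiently small sphere bounds a small disk --- since Perelman's original version is produced on a smooth manifold by gradient-type flows, whereas $Y$ is only a metric measure space. The task is to re-derive this lemma using nothing but the metric and the measure. Here I would rely on the structure theory of Cheeger--Colding \cite{ChCoI}: the renormalized measure $\nu$ obeys Bishop--Gromov from every point, $Y$ is infinitesimally Euclidean on a set of full $\nu$-measure, and the cone- and midpoint-contractions needed at each scale can be carried out purely metrically. The crux is to verify that every step of the \cite{Munn} induction uses \emph{only} such limit-stable properties --- and, in particular, that the threshold $\alpha(k,n)$ computed in the smooth setting remains sharp in the limit; once this is confirmed, the contradiction above completes the proof.
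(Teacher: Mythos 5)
There is a genuine gap, and it sits precisely at the step you yourself flag as the main obstacle. Your strategy is explicitly to \emph{avoid} reducing to the manifolds $M^n_i$ and to run Perelman's scale-by-scale contraction intrinsically in $Y$, re-deriving the local filling step ``using nothing but the metric and the measure.'' That cannot work: the quantitative engine behind Perelman's construction is not just the Bishop--Gromov comparison (\ref{renormalized vol comparison}) plus existence of geodesics; it is Perelman's Maximal Volume Lemma together with the Abresch--Gromoll excess estimate applied to long thin triangles, and the excess estimate is \emph{not} a consequence of the metric-measure structure of $Y$. The paper makes this point explicitly: metric measure spaces with nonnegative Ricci curvature in the Lott--Villani--Sturm sense \cite{LV,Sturm} satisfy Bishop--Gromov but do \emph{not} satisfy the Abresch--Gromoll inequality, which is exactly why the theorem does not extend to that synthetic class. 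So your intrinsic route consumes strictly less information than the hypothesis provides (namely, that $Y$ is a limit of \emph{smooth} manifolds with $\Ric \geq 0$), and the deferred ``crux'' step is where the argument dies; appealing to Cheeger--Colding regularity of $\nu$ does not repair it, since that regularity is shared by spaces on which the conclusion's key estimate fails.

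The paper's proof goes the opposite way at exactly this point: it transfers the two smooth lemmas from the sequence to the limit. Points, geodesics and excess functions in $Y$ are approximated via the almost isometries $f_i$; Perelman's Maximal Volume Lemma is applied in $M^n_i$ (in its finite-ball form $\Vol(B_{p_i}(c_2 R)) \geq (1-\gamma)\omega_n (c_2 R)^n$, needed because the $M^n_i$ may be compact while $Y$ is not) to produce points $q_i$ and geodesics $\overline{p_i q_i}$ that subconverge to a \emph{limit geodesic} $\overline{pq}$ in $Y$ (Lemma \ref{Prop-General Perelman}); and the Abresch--Gromoll estimate is applied in $M^n_i$ and passed to the limit, but only for triangles built on such limit geodesics (Lemma \ref{Prop-General AbGr}) --- a property your purely intrinsic geodesics would not come with. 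With these two generalized lemmas in hand, the Moving In Lemma (Proposition \ref{Lemma-Moving In for limit}) and the Homotopy Construction Theorem (which, as you correctly note, needs only a complete locally compact metric space) run verbatim as in \cite{Munn}. Your opening reduction --- using monotonicity of $r \mapsto \nu(B_z(r))/(\omega_n r^n)$ to upgrade the asymptotic bound to all scales and basepoints --- and the overall HCT-style filling scheme are consistent with the paper; the fatal move is the deliberate decision not to go back to the $M^n_i$. Two smaller inaccuracies: Perelman's smooth argument is not a gradient-flow construction (the smooth structure enters only through the two lemmas above, so the cell-decomposition machinery itself is already metric), and the covering/packing-multiplicity bookkeeping you sketch is not how the threshold is quantified --- it is the iterative $h_{k,n}$ and $\beta(k,c,n)$ scheme of \cite{Munn} that yields $\alpha(k,n) = \inf_{c>1}\beta(k,c,n)$.
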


\noindent Theorem \ref{Thm--simply connected limit} and Theorem \ref{Thm--Main limit theorem} are extensions of previous work of the author \cite{Munn} which in turn follow from a result of G. Perelman's on complete, Riemannian manifolds with nonnegative Ricci curvature \cite{Pe}. The proofs build upon Perelman's by determing precise constants for the volume growth where he only proved the existence of such a constant. These computations can be found in \cite{Munn}.

\textbf{Remark.}
As previously stated, Menguy \cite{Men} showed that even assuming positive Ricci curvature and Euclidean volume growth throughout the sequence it is possible for the Gromov-Hausdorff limit to have locally infinite topological type. In Theorem \ref{Thm--Main limit theorem} we allow nonnegative Ricci curvature in the sequence and only bound the volume growth in the limit. These conditions are enough to guarantee specific homotopy groups vanish in the limit and to control the limiting topology. In Example 3.\ref{Ex-MenguyA}, we adapt Menguy's construction to create a sequence where the volume growth restriction is only obtained in the limit. While the elements of the sequence do not have trivial topology, we find that the limit does. One caveat to Theorem \ref{Thm--Main limit theorem} however is that the bounds $\alpha(k,n)$ are very large and force the volume growth of $(Y,p)$ to be very near that of Euclidean space. It remains to show how sharp the values in Table \ref{Table-alphas} are and if our conclusions remain valid under less rigid conditions on $\alpha_Y$. Determining sharp bounds would more clearly illustrate the nature of the relationship between volume growth and topology for such Gromov-Hausdorff limit spaces.
\\

\textbf{Remark.}
In \cite {And2} Anderson shows that for a complete Riemannian manifold $M^n$ with nonnegative Ricci curvature the order of its fundamental group is bounded above by the reciprocal of its volume growth. In particular, if 
\begin{equation}
\lim_{r \to \infty} \frac{\Vol(B_p(r))}{\omega_n r^n} > \frac{1}{2},
\end{equation} 
then $\pi_1(M^n) = 0$. This was also proved independently by Li \cite{Li} using a heat kernel comparison argument. It is natural to ask whether such a uniform control on the volume growth of a sequence of Riemannian manifolds with $\Ric_{M^n_i} \geq 0$ forces simply connectedness in the limit as well. Anderson's proof utilizes the nonnegativity of Ricci curvature in the universal cover and his argument cannot be extended a priori to a Gromov-Hausdorff limit. The proof of Theorem \ref{Thm--Main limit theorem} however extracts information about the fundamental group without appealing to the deck transformations of the universal cover. It was shown by Sormani-Wei \cite{SoWei2} that the universal cover of the limit is the space itself but this alone does not imply simply connectedness, as illustrated in Example 3.\ref{Ex-Hawaii ring}. The spherical suspension over the Hawaiian earring is its own universal cover but it is not simply connected. Thus, in that sense Theorem \ref{Thm--simply connected limit} gives a partial solution to this problem.
\\

The condition on volume growth in Theorem \ref{Thm--Main limit theorem} implies the sequence $\{(M^n_i, p_i)\}$ is noncollapsing and thus (\cite{ChCoI}, Theorem 5.9) the limit measure $\nu$ is in fact a multiple of the $n$-dimensional Hausdorff measure ${\cal H}^n$ of $Y$. Therefore, restricting the volume growth of the manifolds throughout the sequence, yields precisely the same bound for the volume growth in the limit. We have

\begin{coro}\label{Corollary-topolgy of limit}
Let $\{(M^n_i, p_i)\}$ be a sequence of complete pointed Riemannian manifolds with $\Ric_{M_i} \geq 0$ converging in the pointed Gromov-Hausdorff sense to $(Y,p)$. If there exists a subsequence $\{(M_j, p_j)\}$ such that 
\begin{equation}
 \lim_{r \to \infty} \frac{\Vol_{M_j}(B_{p_j}(r))}{\omega_n r^n} > \alpha(k,n),
\end{equation}
then $\pi_k(Y,p) = 0$.
\end{coro}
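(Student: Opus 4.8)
The plan is to deduce the Corollary directly from Theorem \ref{Thm--Main limit theorem} by showing that the hypothesis on the volume growth of the manifolds forces the renormalized limit measure $\nu$ to have volume growth exceeding $\alpha(k,n)$ as well. First I would pass to the subsequence $\{(M_j,p_j)\}$: since the full sequence converges in the pointed Gromov-Hausdorff sense to $(Y,p)$, so does every subsequence, and the renormalized limit measure is unchanged. Hence there is no loss in assuming that \emph{every} manifold in the sequence satisfies $\alpha_{M_j} := \lim_{r\to\infty}\Vol_{M_j}(B_{p_j}(r))/(\omega_n r^n) > \alpha(k,n)$.

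Next I would establish that the sequence is noncollapsing. By the Bishop-Gromov inequality applied on each $M_j$ (Theorem \ref{Thm-classic Bishop Gromov}), the ratio $\Vol_{M_j}(B_{p_j}(r))/(\omega_n r^n)$ is nonincreasing in $r$ and therefore bounded below by its limit, so $\Vol_{M_j}(B_{p_j}(1)) \geq \omega_n\,\alpha_{M_j} > \omega_n\,\alpha(k,n) > 0$ uniformly in $j$. This uniform lower bound on the unit-ball volumes is exactly the noncollapsing condition, so by (\cite{ChCoI}, Theorem 5.9) the renormalized limit measure is a constant multiple of the $n$-dimensional Hausdorff measure, $\nu = c\,\mathcal{H}^n$, and the limit is $n$-dimensional.

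The heart of the argument is transferring the volume-growth bound from the sequence to $\nu$. In the noncollapsing setting the volumes of metric balls pass to the limit, $\Vol_{M_j}(B_{p_j}(r)) \to \nu(B_p(r))$ for every fixed $r>0$, which is the construction of $\nu$ on balls in \cite{ChCoI} together with the identification $\nu=\mathcal{H}^n$ above. Combining this with the per-manifold Bishop-Gromov bound $\Vol_{M_j}(B_{p_j}(r))/(\omega_n r^n) \geq \alpha_{M_j} > \alpha(k,n)$, which holds for \emph{all} $r$, and letting $j\to\infty$, I obtain $\nu(B_p(r))/(\omega_n r^n) \geq \liminf_j \alpha_{M_j}$ for each fixed $r$. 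Since this ratio is itself nonincreasing in $r$ by \eqref{renormalized vol comparison} applied to $\nu$, its limit as $r\to\infty$ also dominates $\liminf_j\alpha_{M_j} > \alpha(k,n)$. Thus $\lim_{r\to\infty}\nu(B_p(r))/(\omega_n r^n) > \alpha(k,n)$, and Theorem \ref{Thm--Main limit theorem} yields $\pi_k(Y,p)=0$.

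I expect the main obstacle to be the exchange of the two limits $r\to\infty$ and $j\to\infty$ and, relatedly, matching the normalization of $\nu$ so that the manifold volume growth is compared to the limit volume growth on the nose. The clean transfer above relies on identifying $\nu$ with $\mathcal{H}^n$ in the normalization for which ball volumes converge without a renormalizing denominator; this is the normalization implicit in Theorem \ref{Thm--Main limit theorem}, where the growth is measured against Euclidean density and is therefore at most $1$, and it is legitimate precisely because noncollapsing is in force. The argument then uses the monotonicity in \eqref{renormalized vol comparison} to convert the pointwise-in-$r$ bound into a bound on the limiting ratio, while the passage from $\liminf_j\alpha_{M_j}\geq\alpha(k,n)$ to the strict inequality required by Theorem \ref{Thm--Main limit theorem} uses that the hypothesis supplies the strict bound uniformly along the chosen subsequence.
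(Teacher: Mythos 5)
Your proposal is correct and matches the paper's own argument, which is given in the paragraph preceding the Corollary: the volume growth hypothesis forces noncollapsing, so by (\cite{ChCoI}, Theorem 5.9) the limit measure is a multiple of $\mathcal{H}^n$ and the manifolds' volume growth bound passes directly to $\nu$, whereupon Theorem \ref{Thm--Main limit theorem} applies. You simply fill in the details the paper leaves implicit (Bishop--Gromov monotonicity for the unit-ball lower bound, convergence of ball volumes, and the exchange of the limits in $r$ and $j$ via monotonicity of the volume ratio), all consistent with the paper's intended reasoning.
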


Theorem \ref{Thm--Main limit theorem} can be seen as a natural generalization of results that appeared in \cite{Munn}. There we examined complete Riemannian manifolds $M^n$ with nonnegative Ricci curvature and employed techniques of Perelman to determine explicit values $\alpha(k,n)$ for bounds of the volume growth of $M^n$ (denoted by $\alpha_M$ and defined as in (\ref{volume growth of Y}) but replacing $\nu$ with the Riemannian volume element of $M^n$) which guarantee the $k$-th homotopy group of $M^n$ is trivial. These bounds depend only on $n$ the dimension of the manifold and $k$ the dimension of the homotopy. By construction, these constants increase with $k$ and thus knowing $\alpha_M > \alpha(k,n)$ actually implies $M^n$ is $k$-connected. Here we generalize this result to metric measure spaces, specifically those which are the pointed Gromov-Hausdorff limits of complete Riemannian manifolds satisfying

\begin{equation}\label{Ricci nonneg for sequence}
\Ric_{M_i} \geq 0.
\end{equation} 
Such a limit space $(Y,p)$ need not be a Riemannian manifold and so in some sense the condition of $\Ric \geq 0$ has its counterpart through (\ref{Ricci nonneg for sequence}). The requirement on the volume growth of $(Y,p)$ is equivalent to the Riemannian case only now applied to the renormalized limit measure on $Y$ obtained from \cite{ChCoI}. The difficulty arises in verifying that the primary tools that were used in \cite{Munn} also hold in an appropriate sense for the limit space $(Y,p)$. By proving analogs of these lemmas, we can yield the same conclusions for $\pi_k(Y,p)$ that we proved in the Riemannian setting.

To make these ideas more rigorous, we now briefly review the main ideas from \cite{Munn} and explain how these concepts can be adapted to examine metric measure limits. 

Let $M^n$ be a complete Riemannian manifold with $\Ric \geq 0$. The primary tool to show $\pi_k(M^n) = 0$ was the Homotopy Construction Theorem (\cite{Munn}, Theorem 2.7). This theorem states conditions which guarantee when a continuous map $f: \s^k \to M^n$ possesses a continuous extension $g$ on $\D^{k+1}$. The Homotopy Construction Theorem (HCT) is stated and proved for any complete locally compact length space and does not require the smooth structure of a Riemannian manifold. In \cite{Munn}, the necessary conditions of the HCT are shown to be satisfied when $M^n$ has $\Ric \geq 0$ and the volume growth obeys certain lower bounds. To obtain the conditions required to apply the HCT on $M^n$, we use two facts from the Riemannian geometry of manifolds with nonnegative Ricci curvature: the Abresch-Gromoll excess estimate (\cite{Munn}, Theorem 1.3; c.f. \cite{AbGr, Ch}) and a maximal volume lemma of Perelman (\cite{Munn}, Lemma; c.f. \cite{Pe}). Both these lemmas are generalized for the limit space $(Y,p)$ and appear here as Lemma \ref{Prop-General Perelman} and Lemma \ref{Prop-General AbGr} (resp.). The generalized Perelman Maximal Volume Lemma guarantees the existence of a well placed very long geodesic in $(Y,p)$ provided the volume growth is large enough. The generalization of the Abresch-Gromoll excess estimate is stated for geodesics in $(Y,p)$ which arise as the limit geodesics in the sequence of converging manifolds. Note that metric measure spaces with nonnegative Ricci curvature, as defined by Lott-Villani-Sturm \cite{LV,Sturm}, do not satisfy the Abresch-Gromoll inequality and thus our results here do not extend to that class of spaces. 

After proving these two generalizations, we then also have analogs of the Moving In Lemma (\cite{Munn}, Lemma 3.4) and the Main Lemma (\cite{Munn}, Lemma 3.5) as stated for the limit space $(Y,p)$. The proofs of the Moving In Lemma and Main Lemma follow precisely as in \cite{Munn} replacing the use of the Abresch-Gromoll excess estimate and Perelman's Maximal Volume Lemma with our generalized versions where necessary. The Moving In Lemma asserts that provided there is enough volume growth, given a continuous map $\phi: \s^k \to B_q(\rho)$ there exists another continuous map $\tphi$ also defined on $\s^k$ but whose image lies within a ball at $q$ with radius slightly smaller than $\rho$. The maps $\phi, \tphi$ need not be homotopic but the diameter of their images is controlled (in relation to the volume growth) in a uniform way. The Main Lemma provides a way of keeping track of the increase in volume growth as the dimension of the homotopy increases and is proven by induction. In this sense, the Moving In Lemma is the primary tool for constructing the homotopy (either in a Riemannian manifold $M^n$ or a limit space $(Y,p)$). In fact, this is the step in the argument where the volume growth restriction is introduced. With the Moving In Lemma in place for $(Y,p)$, the conditions for the HCT also hold in the limit space and thus the necessary homotopic extension exists to guarantee $\pi_k(Y,p) = 0$.

We proceed as follows: in Section \ref{Section-Generalizations-Background} we reveiw the basic ideas of pointed Gromov-Hausdorff distance and convergence and define the renormalized limit measure for the limits of sequences satisfying (\ref{Ricci nonneg for sequence}). In Section \ref{Section-Generalizations-Examples} we provide examples which aim to further demonstrate the relationship between the topology of pointed Gromov-Hausdorff limit spaces and their volume growth. In Section \ref{Section-Generalizations-Lemmas} we prove a generalization of the Abresch-Gromoll excess estmate (Lemma \ref{Prop-General AbGr}) and a generalization of Perelman's Maximal Volume Lemma (Lemma \ref{Prop-General Perelman}). In Section \ref{Section-Proof of limit theorem} we show how to prove Theorem \ref{Thm--Main limit theorem} using these generalized lemmas. 
\\

{\small \textbf{Acknowledgements.} This paper was completed as a Visiting Fellow at the University of Warwick under the NSF International Reserch Fellowship. I would like to thank the Mathematics Institute and the NSF for their support.}

\section{Background and Definitions} \label{Section-Generalizations-Background}

We begin by briefly discussing the notion of Gromov-Hausdorff distance and convergence, first for compact metric spaces and then for the noncompact case. The Gromov-Hausdorff distance defines a metric on the class of isometry classes of compact metric spaces, where the distance between isometric spaces is zero. More precisely,

\begin{Def}{\em (\cite{GLP}, Definition 3.4; c.f. \cite{BBI}, Definition 7.3.10).} Let $X$ and $Y$ be two compact metric spaces. The Gromov-Hausdorff distance between them, denoted $d_{GH}(X,Y)$, is defined as
$$d_{GH}(X,Y) = \inf d^Z_H(f(X), g(Y)),$$
\noindent where the infimum is taken over all metric spaces $Z$ and all isometric embeddings $f: X \to Z$, $g: Y \to Z$.

\end{Def}

\noindent Here $d^Z_H$ denotes the Hausdorff distance between subsets of $Z$ and is defined as
$$ d^Z_H(A,B) = \inf\{ \epsilon >0, : B \subset T_{\epsilon}(A) \textrm{and } A \subset T_{\epsilon}(B)\}, $$

\noindent where $T_{\epsilon}(A) = \{x \in Z : d_Z(x, A) < \epsilon \}$. The {\em pointed Gromov-Hausdorff distance} is defined exactly as above (as the infimum over Hausdorff distances of images in a common metric space $Z$) but with the additional requirement that $f(x) = g(y)$ in $Z$. 

A sequence of metric spaces $\{X_i\}^{\infty}_{i=1}$ converges in the Gromov-Hausdorff sense to a compact metric space $X$ provided $d_{GH}(X_i, X) \rightarrow 0$. When noncompact metric spaces are involved it is necessary to keep track of a sequence of points $p_i \in X_i$ through the convergence. We consider pointed metric spaces $\{(X_i, x_i)\}$ and define convergence in terms of the pointed Gromov-Hausdorff distance which is essentially convergence on compact sets. For compact metric spaces the pointed convergence and ordinary converge coincide. We define pointed Gromov-Hausdorff convergence as follows (c.f. Appendix in 
\cite{SoWei2})

\begin{Def}{\em (\cite{GLP}, Definition 3.14).}\label{Def-pted GH convergence} A sequence of noncompact metric spaces $(X_i, x_i)$ converges in the pointed Gromov Hausdorff sense to $(Y,y)$ if for all $R >0$ there exists a sequence $\epsilon_i \to 0$ such that $B_{x_i}(R + \epsilon_i)$ converges to $B_y(R)$ in the Gromov-Hausdorff sense.
\end{Def}

This is equivalent \cite{BBI} to the following: for every $r >0$ and $\eta >0$, there exists an $N>0$ such that for all $i >N$, there exists a (not necessarily continuous) map $f_i: B_{p_i}(r) \to X$ satisfying

1) $f_i(p_i) = p_{\infty}$;

2) $\sup_{x_1, x_2 \in B_{p_i}(r)}|d(f_i(x_1), f_i(x_2))- d(x_1, x_2)| < \eta$;

3) $T_{\epsilon}(f_i(B_{p_i}(r))) \supset B_{p_{\infty}}(r-\eta)$.

Such a map $f_i$ is called an {\it almost isometry} and if a sequence of manifolds converges in the Gromov-Hausdorff sense then a collection of almost isometries exists. Gromov-Hausdorff convergence defines a very weak topology for metric spaces; and in general, one can only expect that the limit of a sequence of length spaces is again a length space. Recall, a length space is a metric space where points can be connected by a length minimizing geodesic. However, not every path minimizing geodesic in the limit space is realized as the limit of geodesics in the sequence of manifolds.

\begin{Def}\label{def-limit geodesic}
Let $(Y,p)$ be the pointed Gromov-Hausdorff limit of a sequence of Riemannian manifolds $(M^n_i, p_i)$. A geodesic path $\sigma \in Y$ is called a limit geodesic if it can be realized as the limit of geodesics $\sigma_i \in M^n_i$; that is, there exists an almost isometry $f_i$ such that $f_i(\gamma_i(t)) = \gamma(t)$.
\end{Def}

When Gromov proved that the limit of a sequence of pointed Riemannian manifolds $\{(M^n_i, p_i)\}_{i=1}^{\infty}$ with lower bounded Ricci curvature was a length space $(Y,p)$, he in fact constructed limit geodesics (not just geodesics) between arbitrary pairs of points. Indeed, every pair of points in $Y$ has a limit geodesic of minimizing length connecting them. 

Recall the Bishop Gromov Volume Comparison Theorem for Riemannian manifolds
\begin{thm} {\em (\cite{BiCr, GLP})} \label{Thm-classic Bishop Gromov}
 Let $M^n$ be a complete $n$-dimensional Riemannian manifold with $\Ric \geq 0$. Then for all $0<r\leq R$,
\begin{equation} \label{formula-classic Bishop}
 \Vol(B_p(r)) \leq \omega_n r^n;
\end{equation}
and
\begin{equation}
 \dfrac{\Vol(B_p(r))}{\Vol(B_p(R))} \geq \left(\dfrac{r}{R} \right)^n.
\end{equation}
Equality holds in (\ref{formula-classic Bishop}) if and only if the ball $B_p(r) \subset M^n$ is isometric to the ball of radius $r$ in Euclidean space.
\end{thm}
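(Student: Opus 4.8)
The plan is to work in geodesic polar coordinates centred at $p$ and to reduce both volume estimates to a one-dimensional comparison coming from the Ricci bound. Writing the Riemannian volume element as $dV = A(r,\theta)\,dr\,d\theta$ on the star-shaped segment domain where $\exp_p$ is a diffeomorphism, with $\theta$ ranging over the unit sphere in $T_pM^n$, I would first record the Euclidean model density $r^{n-1}$ together with the normalization $\lim_{r\to 0} A(r,\theta)/r^{n-1} = 1$. The entire theorem will follow once I establish that for each fixed $\theta$ the ratio $r \mapsto A(r,\theta)/r^{n-1}$ is non-increasing on the interval up to the cut distance.

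To prove this monotonicity I would study the mean curvature $m(r,\theta) = \partial_r \log A(r,\theta)$, which equals the Laplacian of the distance function $r(x) = d(p,x)$ restricted to the radial geodesic. Applying the Bochner formula to $r$, together with $\Ric \geq 0$ and the Cauchy--Schwarz bound $|\mathrm{Hess}\,r|^2 \geq (\Delta r)^2/(n-1)$, yields the Riccati inequality $m' + \frac{m^2}{n-1} \leq 0$. Comparing this with the Euclidean solution $m_0(r) = (n-1)/r$ by a standard Sturm-type argument gives $m(r,\theta) \leq (n-1)/r$, and integrating $\partial_r \log\!\big(A(r,\theta)/r^{n-1}\big) = m(r,\theta) - (n-1)/r \leq 0$ produces exactly the desired monotonicity of the density ratio.

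With the pointwise monotone density in hand, I would integrate over $\theta$ and $r$. Since $A(r,\theta)/r^{n-1} \leq 1$ for all admissible $r$, we get $A(r,\theta) \leq r^{n-1}$ and hence $\Vol(B_p(r)) \leq \int_{\s^{n-1}}\int_0^r r^{n-1}\,dr\,d\theta = \omega_n r^n$, which is (\ref{formula-classic Bishop}). For the ratio statement I would invoke the elementary lemma that if $f(r)/g(r)$ is non-increasing then $\big(\int_0^{r_1} f\big)/\big(\int_0^{r_1} g\big)$ is non-increasing in $r_1$; taking $f = A(\cdot,\theta)$ and $g(r) = r^{n-1}$, integrating first in $r$ and then in $\theta$ delivers $\Vol(B_p(r))/\Vol(B_p(R)) \geq (r/R)^n$.

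The main technical obstacle is the cut locus: beyond the cut distance $c(\theta)$ the exponential map is no longer a diffeomorphism, so $A(r,\theta)$ is only the true density on the segment domain. I would handle this by extending $A(r,\theta)$ by zero for $r > c(\theta)$; since the cut locus has measure zero this extension changes no volume integral, and because the density drops to $0$ there it only reinforces the monotonicity of the ratio. Finally, for rigidity, equality in (\ref{formula-classic Bishop}) forces $A(r,\theta) = r^{n-1}$ throughout $B_p(r)$, hence equality in the Riccati inequality, which forces $\Ric(\partial_r,\partial_r) = 0$ and the second fundamental form of each geodesic sphere to coincide with that of the Euclidean sphere; this makes the metric flat and, together with the absence of cut points inside radius $r$, shows that $\exp_p$ is an isometry from the Euclidean $r$-ball onto $B_p(r)$.
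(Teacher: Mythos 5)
The paper does not prove this theorem; it quotes it as the classical Bishop--Gromov comparison with citations to \cite{BiCr, GLP} and uses it as a black box (directly, and through its measure-theoretic analogue in Theorem \ref{Theorem-ChCo limit measure}). So there is no internal proof to compare against, and the only meaningful review is of your argument on its own terms: it is the standard textbook proof, and it is correct. The skeleton --- polar-coordinate density $A(r,\theta)$ with $A(r,\theta)/r^{n-1} \to 1$, the Riccati inequality $m' + m^2/(n-1) \leq 0$ from Bochner plus $\Ric \geq 0$, comparison with $m_0(r) = (n-1)/r$, monotonicity of $A(r,\theta)/r^{n-1}$, the ratio-of-integrals lemma applied for fixed $\theta$ and only then integrated in $\theta$ (this order matters, and you have it right, since the model denominator is $\theta$-independent), extension of $A$ by zero past the cut distance, and the equality analysis --- is exactly how the result is proved in the cited sources.

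Three small points to polish, none of which is a genuine gap. First, the bound $|\mathrm{Hess}\,r|^2 \geq (\Delta r)^2/(n-1)$ requires the remark that $\mathrm{Hess}\,r$ annihilates $\partial_r$, so Cauchy--Schwarz is applied on the $(n-1)$-dimensional orthogonal complement; as written the divisor $n-1$ (rather than $n$) is unjustified. Second, at a cut point that is not conjugate the density does not ``drop to $0$'' --- it is generally positive there and is set to zero only by your extension; the correct and sufficient statement is that truncating a nonnegative non-increasing function to zero keeps it non-increasing. Third, in the rigidity step you should make explicit that equality forces $A(s,\theta) = s^{n-1}$ for all $s < r$ and all $\theta$, which in particular forces the cut distance to be at least $r$ in every direction (were it smaller in some direction, continuity of the cut distance would give a positive-measure set of directions carrying a strict volume deficit); only then does equality throughout the Riccati/Cauchy--Schwarz chain yield $\mathrm{Hess}\,r = s^{-1}\left(g - dr \otimes dr\right)$, hence vanishing radial curvature and the flat polar metric $ds^2 + s^2\,d\theta^2$, i.e.\ $\exp_p$ an isometry on the $r$-ball. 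Also fix the cosmetic variable clash in $\int_0^r r^{n-1}\,dr$, which should read $\int_0^r s^{n-1}\,ds$.
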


Using a ball counting argument following from Theorem \ref{Thm-classic Bishop Gromov}, Gromov showed that 
\begin{thm} {\em (\cite{GLP}, c.f. \cite{BBI}, Theorem )}
Any pointed sequence of complete $n$-dimensional Riemannian manifolds $\{(M^n_i, p_i)\}$ with nonnegative Ricci curvature has a subsequence which converges in the pointed Gromov-Hausdorff topology to a pointed length
space $(Y, p)$.
\end{thm}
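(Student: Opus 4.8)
The plan is to reduce the statement to the classical Gromov precompactness criterion for \emph{compact} metric spaces, namely that a family which is \emph{uniformly totally bounded} (uniformly bounded diameter together with, for each $\epsilon>0$, a uniform bound on the number of $\epsilon$-balls needed to cover each member) is precompact in the Gromov-Hausdorff topology. Since the $M^n_i$ are noncompact, I would apply this criterion not to the manifolds themselves but to the closed balls $\overline{B_{p_i}(R)}$ for each fixed $R>0$, extract a convergent subsequence for each $R$, and then pass to a diagonal subsequence along which $B_{p_i}(R)$ converges for every $R$ simultaneously; the pointed limit $(Y,p)$ is then assembled from these limit balls.

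The heart of the argument is the uniform covering bound, which follows from Theorem \ref{Thm-classic Bishop Gromov}. Fix $R>0$ and $\epsilon\in(0,R)$, and let $\{x_1,\dots,x_N\}\subset B_{p_i}(R)$ be a maximal $\epsilon$-separated set, so that the balls $B_{x_j}(\epsilon/2)$ are pairwise disjoint and contained in $B_{p_i}(2R)$. For each $j$ one has $B_{p_i}(R)\subset B_{x_j}(2R)$, hence $\Vol(B_{x_j}(2R))\geq \Vol(B_{p_i}(R))$, and the volume ratio inequality gives
\begin{equation}
\Vol(B_{x_j}(\epsilon/2)) \geq \left(\frac{\epsilon/2}{2R}\right)^n \Vol(B_{x_j}(2R)) \geq \left(\frac{\epsilon}{4R}\right)^n \Vol(B_{p_i}(R)).
\end{equation}
Summing over the disjoint balls and applying Bishop-Gromov once more in the form $\Vol(B_{p_i}(2R))\leq 2^n\,\Vol(B_{p_i}(R))$ yields
\begin{equation}
N\left(\frac{\epsilon}{4R}\right)^n \Vol(B_{p_i}(R)) \leq \sum_{j=1}^N \Vol(B_{x_j}(\epsilon/2)) \leq \Vol(B_{p_i}(2R)) \leq 2^n\,\Vol(B_{p_i}(R)),
\end{equation}
so that $N\leq 2^n(4R/\epsilon)^n$. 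Crucially this bound is independent of $i$, and since a maximal $\epsilon$-separated set is an $\epsilon$-net, the ball $B_{p_i}(R)$ is covered by at most $2^n(4R/\epsilon)^n$ balls of radius $\epsilon$, uniformly in $i$. The diameters are at most $2R$, so the uniform total boundedness hypothesis is verified.

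With the criterion in hand I would run the diagonalization: for $R=1,2,3,\dots$ successively extract nested subsequences along which $\overline{B_{p_i}(R)}$ converges, then take the diagonal subsequence. One checks the limits are compatible --- the limit of $B_{p_i}(R_1)$ embeds isometrically as a metric ball inside the limit of $B_{p_i}(R_2)$ for $R_1<R_2$, because the almost isometries of Definition \ref{Def-pted GH convergence} restrict consistently --- and defines $(Y,p)$ as the resulting increasing union with the induced metric. Finally I would verify that $(Y,p)$ is a length space: given $x,y\in Y$ with approximating points $x_i,y_i\in M^n_i$ under the almost isometries, take midpoints $m_i$ of minimizing geodesics in $M^n_i$; uniform total boundedness forces a subsequence of the $m_i$ to converge to a point $m\in Y$ with $d(x,m)=d(m,y)=\tfrac12 d(x,y)$, and iterating this midpoint construction over a dense set of parameters together with completeness produces a minimizing geodesic (indeed a limit geodesic in the sense of Definition \ref{def-limit geodesic}). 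I expect the main obstacle to be not the volume estimate, which is routine once Bishop-Gromov is available, but the bookkeeping needed to glue the limit balls into a single consistent pointed space and to promote the convergent midpoints into genuine minimizing paths --- that is, showing the abstract limit carries a length structure rather than merely a metric one.
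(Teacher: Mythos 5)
Your proposal is correct and follows exactly the route the paper indicates: the paper gives no proof of its own, citing Gromov \cite{GLP} and noting only that the theorem follows from ``a ball counting argument'' based on Theorem \ref{Thm-classic Bishop Gromov}, which is precisely your packing estimate $N \leq 2^n(4R/\epsilon)^n$ giving uniform total boundedness of the balls $\overline{B_{p_i}(R)}$. The remaining steps --- diagonalization over $R$, gluing the limit balls, and the midpoint argument showing the complete limit is a length space --- are the standard completion of this argument (as in \cite{BBI}), and you have identified correctly both the estimate and where the residual bookkeeping lies.
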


In \cite{ChCoI}, Cheeger-Colding examine the structure of spaces $Y$, which can be realized as the pointed Gromov-Hausdorff limits of sequences of complete, connected Riemannian manifolds, $\{(M^n_i, p_i) \}_{i=1}^{\infty}$ satisfing (\ref{Ricci nonneg for sequence}), see also \cite{Fu}. Among other things, they construct renormalized limit measures $\nu$ on the limit space $Y$ and show that such a measure satisfies an analog of the Bishop-Gromov volume comparison. 

\begin{thm}\label{Theorem-ChCo limit measure}{\em (\cite{ChCoI}, Theorem 1.6).}
Given any sequence of pointed Riemannian manifolds $\{(M^n_i, p_i) \}$ for which $\Ric_{M_i} \geq 0$, there is a subsequence $\{(M^n_j, p_j)\}$ convergent to some $(Y^m, p)$ in the pointed Gromov-Hausdorff sense, and a continuous function $\nu : Y^m \times \R^+ \rightarrow \R^+$, such that if $q_j \in M^n_j$, $z \in Y^m$, and $q_j \rightarrow z$, then for all $R>0$,
\begin{equation}
\frac{\Vol_{M_j}(B_{q_j}(R))}{\Vol_{M_j}(B_{p_j}(1))} \rightarrow \nu(B_z(R)).
\end{equation}
Furthermore, for all $z \in Y^m$ and $0 <  r_1 \leq r_2$, the renormalized limit measure $\nu$ satisfies the Bishop-Gromov type volume comparison stated in (\ref{renormalized vol comparison}).
\end{thm}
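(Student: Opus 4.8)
The plan is to realize $\nu$ as the limit of \emph{renormalized volume functions}, extracting it by an equicontinuity-plus-diagonal argument in which every quantitative estimate comes from Bishop-Gromov (Theorem \ref{Thm-classic Bishop Gromov}). Having already passed to a pointed Gromov-Hausdorff convergent subsequence $(M^n_j,p_j)\to(Y,p)$, I would define on each $M^n_j$ the function
\[ \underline{V}_j(q,R)=\frac{\Vol_{M_j}(B_q(R))}{\Vol_{M_j}(B_{p_j}(1))},\qquad q\in M^n_j,\ R>0, \]
so that $\underline{V}_j(p_j,1)=1$ for every $j$. The normalization by $\Vol_{M_j}(B_{p_j}(1))$ is exactly what prevents the volumes from degenerating as $j\to\infty$, and the assertion to be proved is precisely that $\underline{V}_j(q_j,R)\to\nu(B_z(R))$ whenever $q_j\to z$.

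Next I would establish uniform bounds and uniform equicontinuity of the family $\{\underline{V}_j\}$. Fixing $D>0$, for any $q$ with $d(q,p_j)\le D$ the inclusion $B_q(R)\subset B_{p_j}(D+R)$ together with the relative comparison in Theorem \ref{Thm-classic Bishop Gromov} gives $\underline{V}_j(q,R)\le (D+R)^n$, so the family is uniformly bounded on compact sets. For equicontinuity, the relative volume comparison yields, for $0<R\le R+s$,
\[ 0\le \underline{V}_j(q,R+s)-\underline{V}_j(q,R)\le \underline{V}_j(q,R+s)\left(1-\left(\frac{R}{R+s}\right)^n\right), \]
which controls the radius variable; and since $d(q,q')=s$ forces $B_{q'}(R)\subset B_q(R+s)$, the same estimate controls the center variable. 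Combined with the uniform bound, these inequalities show that $\{\underline{V}_j\}$ is uniformly equicontinuous on compact subsets, uniformly in $j$.

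I would then transport the functions to $Y$ and extract a diagonal limit. Using almost isometries $f_j\colon B_{p_j}(r)\to Y$, for each $z\in Y$ I choose $q_j\to z$; if $q_j\to z$ and $q_j'\to z$ are two such choices then $d(q_j,q_j')\to 0$, so the center-variable equicontinuity forces $|\underline{V}_j(q_j,R)-\underline{V}_j(q_j',R)|\to 0$, making any limit independent of the approximating sequence. Fixing a countable dense set $\{z_\ell\}\subset Y$ and rational radii and using the uniform bound, a diagonal argument produces a subsequence along which $\underline{V}_j$ converges at every $(z_\ell,R)$; I define $\nu$ there, and by uniform equicontinuity it extends uniquely to a continuous function on $Y\times\R^+$. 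A three-epsilon argument then upgrades this to the full convergence statement for every $z\in Y$, every $q_j\to z$, and every $R>0$. Finally, passing the relative inequality $\underline{V}_j(q_j,r_1)/\underline{V}_j(q_j,r_2)\ge (r_1/r_2)^n$ to the limit gives the Bishop-Gromov type comparison (\ref{renormalized vol comparison}) for $\nu$.

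The hard part is not any single estimate but the fact that the $\underline{V}_j$ live on \emph{different} metric spaces, so their convergence cannot be read off directly: one must transport them to $Y$ through the almost isometries and simultaneously verify that the resulting limit is both well defined (independent of the approximating sequences) and genuinely continuous. Uniform equicontinuity in $j$, which is exactly what Bishop-Gromov supplies, is precisely the ingredient that reconciles these two requirements, and the normalization is what keeps the family from escaping to zero or infinity.
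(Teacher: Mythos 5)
The paper does not prove this statement itself---it is quoted from Cheeger--Colding \cite{ChCoI}, Theorem 1.6---and your argument is essentially a faithful reconstruction of their original proof: renormalized volume functions $\underline{V}_j$ made uniformly bounded and equicontinuous (in both center and radius) by the Bishop--Gromov comparison, transported to the limit via almost isometries, and extracted by an Arzel\`a--Ascoli/diagonal argument over a countable dense set with a three-epsilon upgrade. Your proposal is correct for the statement as quoted; the one point you leave implicit, namely the positive lower bound keeping the limit in $\R^+$, follows from the same comparison since $B_{p_j}(1)\subset B_q(D+1)$ gives $\underline{V}_j(q,D+1)\geq 1$ and hence $\underline{V}_j(q,R)\geq \left(\frac{R}{D+1}\right)^n$ for $d(q,p_j)\leq D$.
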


For $y \in Y$, the volume ratio $\frac{\nu(B_y(r))}{\omega_n r^n}$ is nonincreasing as a function of $r$. If in addition, 
$$
\Vol(B_{p_i}(1)) \geq v > 0,
$$ then we say that the sequence is {\em noncollapsing}. Otherwise, the sequence is said to {\em collapse}. Note that if a sequence of balls at the basepoints are noncollapsing then the same is true for any sequence of balls centered at other basepoints. This follows by applying the Bishop-Gromov Volume Comparison to the second sequence of balls and using a volume comparison argument to bound their volume from below. 

For any sequence, collapsed or not, it is possible to find a subsequence for which the renormalized limit measure exists. Note that in the noncollapsed case it is not necessary to pass to a subsequence nor renormalize the measure; the limit measure is simply the $n$-dimensional Hausdorff measure on $Y^m$. For any $R>0$, and $q_i \to q \in Y$,

\begin{equation}
\lim_{i \to \infty} \Vol(B_{q_i}(R)) = {\cal H}^n (B_q(R)),
\end{equation}

\noindent A renormalized limit measure is then a multiple of the $n$-dimensional Hausdorff measure.

Note, however that this uniqueness need not hold in the collapsed case and the renormalized limit measure can depend on the subsequence (see Example 1.24 in \cite{ChCoI}). Since we are concerned primarily with sequences satisfying the Euclidean volume growth condition in the limit, we will focus only on noncollapsed sequences. 

With this notion of measure for the limiting space $Y$, we can generalize the notion of volume growth to the class of metric measure limit spaces defined in Theorem \ref{Theorem-ChCo limit measure}

\begin{Def}\label{def-vol growth for mms limit}
Let $(Y,p)$ be the pointed Gromov-Hausdorff limit of a sequence $\{(M^n_i, p_i) \}$ of complete, connected Riemannian manifolds all of whose Ricci curvatures are nonnegative, $\Ric_{M_i} \geq 0$. Let $\nu$ denote the renormalized limit measure of $(Y,p)$ as defined above. Set
\begin{equation}\label{volume growth of Y}
\alpha_Y := \lim_{r \rightarrow \infty} \frac{\nu(B_p(r))}{\omega_n r^n}.
\end{equation}
\end{Def}

Note that $\alpha_Y$ is a global geometric invariant of $Y$ and it is independent of the base point $p$ in the definition. 

\section{Examples}\label{Section-Generalizations-Examples}

In this section we give some examples of pointed Gromov-Hausdorff limits to aid the reader with intuition and to further describe how the topology of the limit space is influenced by the volume growth of the limit and the nonnegativity of the Ricci curvature in the sequence. These examples are stated in 2 or 3 dimensions but many can be generalized to higher dimensions.

As stated before, the Gromov-Hausdorff metric gives a very weak notion of convergence and the topology can change in the limit even in 2 dimensions. Recall the following two well known examples:

\begin{ex}\label{Ex-limit cylinder}
 (Appearance of topology in the limit)
\end{ex}
\noindent There is a sequence of complete noncompact Riemannian manifolds $M^n_i$ with $\Ric \geq 0$ and $\pi(M^n_i) = 0$ whose Gromov-Hausdorff limit $Y$ is not simply connected.
\\

Let $M^2$ be the infinite half cylinder $\s^1 \times [0, \infty)$ capped off on one end with the upper hemisphere of $\s^2$ glued to $\s^1 \times \{0\}$ and the metric suitably smoothed at the union. $M^2$ is simply connected and has nonnegative Ricci curvature (in fact, nonnegative sectional curvature). Take a sequence of points $p_i \in M^2$ so that $d_M(N, p_i) \to \infty$ as $i \to \infty$ and where $N$ is the north pole of the hemisphere. The pointed sequence $(M^2, p_i)$ converges in the pointed Gromov-Hausdorff limit to $\s^1 \times \mathbb{R}$ which is not simply connected. 
\\

\begin{ex}\label{Ex-limit cone}
 (Disappearance of topology in the limit)
\end{ex}
\noindent There is a sequence of complete noncompact Riemannian manifolds $M^n_i$ with $\alpha_{M_i} > 0$ which are not simply connected whose limit space $Y$ is simply connected.
\\

Consider cones $M^2 = (\R^2, dt^2 + a^2 t^2 d\theta^2)$, with $0< a <1$  and the metric smoothed appropriately at the vertex. Attach a small handle  and fix a point $p$ near the vertex but away from the handle. By altering the metric through the sequence we can make the handle slide off to infinity away from the fixed point $p$. Note that each element of the sequence $(M^2_i, p)$ has Euclidean volume growth and the pointed Gromov-Hausdorff limit $(Y,p)$ is precisely a cone with $\alpha_Y = a >0$ and it is simply connected.
\\

The above examples show that the nature of the limit is influenced by the behavior of the base point through the convergence. Note however that the volume growth does not change through the convergence. Example \ref{Ex-limit cylinder} has linear volume growth throughout while Example \ref{Ex-limit cone} has Euclidean volume growth. It is possible for the volume growth to change as the sequence converges and, in fact, Theorem \ref{Thm--Main limit theorem} only requires the volume growth lower bound for the limit space. 

\begin{ex} (Dependence of volume growth bounds only for the limit) \label{Ex-MenguyA}
\end{ex}
\noindent There is a sequence of complete, noncompact 4-manifolds $M^4_i$ with $\Ric > 0$, Euclidean volume growth satisfying $0< \alpha_{M^4_i} \leq \alpha(2,4)$ and with $\pi_2(M^4_i) \neq 0$ whose limit space has volume growth $\alpha_Y > \alpha(2,4)$ and $\pi_2(Y) = 0$.
\\

In \cite{Men}, Menguy constructs a 4-dimensional manifold with positive Ricci curvature and Euclidean volume growth (i.e. $\alpha_{M^4_i} \geq c > 0$) with infinite topological type. The construction begins by defining a metric of metric cone over a spherical suspension over a small ball (see also \cite{ShSo}). The result is a double warped product
$$
g_{M^4} = dr^2 + (cr)^3(dt^2 + \sin^2 t \cdot R_0 d\sigma^2),
$$
where $d\sigma$ is the metric on the round sphere, $0< c < 1$ and $R_0 < 1$. 

The cone structure ensures the manifold has Euclidean volume growth like $cr^4$. Menguy then glues in a building block of Perelman \cite{Pe2} which has nontrivial topology along the edge of the cone formed from the singular points of the suspension. The metric can be smoothed to ensure the Ricci curvature is always positive. The final product is a manifold with positive Ricci curvature and Euclidean volume growth but nontrivial $\pi_2$. From \cite{Munn} we can say more about the volume growth of this $M^4$. Namely, it must have $\alpha_{M^4_i} \leq \alpha(2,4)$.

Let $p$ be the vertex of the cone in $M^4_i$ and consider the pointed Gromov-Hausdorff limit achieved by blowing up the metric at this point. That is take $(M^4_i, p)$ with the metric 
$$
g_{M^4_i} = r_i^{-2}g_{M^4}
$$ with $r_i \to \infty.$  This sequence subconverges in the pointed Gromov-Hausdorff limit to Euclidean space. Thus the limit space observes $\alpha_Y > \alpha(2,4)$ while the elements of the sequence clearly do not. 
\\

Another example where the volume growth requirement is attained in the limit but fails throughout the sequence is described below. Note that strict bounds on $\alpha_Y$ alone do not force simplified topology in the limit. Despite the very large volume growth in the limit of the following example, it is not simply connected as the elements within the sequence do not have nonnegative Ricci curvature.  

\begin{ex}\label{Ex-hyperboloid}
 (Dependence on Ricci curvature lower bound throughout the sequence)
\end{ex}

\noindent There is a sequence of complete noncompact Riemannian manifolds $M^2_i$ with linear volume growth (i.e. $\alpha_{M_i} =0$) whose limit $Y$ has larger than Euclidean volume growth but is not simply connected--owing to the lack of a Ricci lower bound for the $M^2_i$.
\\

Let $M^2$ be the one-sheeted hyperboloid in $\R^3$ defined by
\begin{equation}
M^2 = \Big\{(x,y,z) \in \R^3 : \dfrac{x^2}{a^2} + \dfrac{y^2}{b^2} - \dfrac{z^2}{c^2} = 1 \Big\}.
\end{equation} 
Let $p$ be the point $(a, 0 , 0)$ and define the manifold $M^2_i$ so that its metric $g_{M^2_i}$ agrees with that of the hyperboloid inside a ball of radius $R_i$ centered at $p$. Outside that ball, the metric $g_{M^2_i}$ is defined as the metric of the cylinder of radius $1 + \dfrac{R_i^2}{c^2}$. The two metrics can be smoothed together appropriately so that the final metric of $M^2_i$ is smooth. 

Note that $\alpha_{M^2_i} = 0$ for all $i$ because in the limit $M^2_i$ approximates the cylinder. However, this sequence of manifolds converges in the pointed Gromov-Hausdorff limit to the hyperboloid which has volume growth larger than that of Euclidean space, i.e. $\alpha_{M^2} > 1$. Naturally, the sequence does not satisfy the necessary Ricci bound and clearly the one-sheeted hyperboloid is not even simply connected regardless of how large the volume growth in the limit is.
\\

We end by giving a similar example which illustrates the necessity of $\Ric_{M_i} \geq 0$ throughout the elements of the sequence. 

\begin{ex}\label{Ex-Hawaii ring} (Large volume growth without Ricci curvature bound)
\end{ex}

Let $X \subset \R^2$ be the Hawaiian earring defined by
$$
X = \bigcup^{\infty}_{k = 1} C_k,
$$
where each $C_k$ is the circle with center $(2^{-k}, 0)$ and radius $2^{-k}$. Let $X_i = \bigcup^i_{k = 1} C_k$ and consider the product $M^2_i = \R \times X_i$ with the warped product metric
$$
(M^2_i, g_{M^2_i}) := (\R \times X_i, dt^2 + \sinh^2(t) g_{X_i}),
$$
where $g_{X_i}$ is the metric of $X_i$ induced from $\R^2$.

The sequence $\{(M^2_i, (0,0,0))\}$ converges in the pointed Gromov-Hausdorff limit to $\R \times_{\sinh^2(t)} X$ and the warping function $\sinh$ gives each element of the sequence very large volume growth. Thus, the limit has large volume growth as well. However the limit is clearly not simply connected as the Hawaiian earring is not simply connected. So, without the bound on Ricci curvature, the limit may not have trivial homotopy group even if $\alpha_Y > \alpha(1,2)$.

\section{Generalizations of Main Lemmas} \label{Section-Generalizations-Lemmas}

Recall from \cite{Munn},
\begin{Def}For constants $c>1$, $\epsilon >0$ and $n \in \mathbb{N}$, define
\begin{equation}\label{Def--gamma}
\gamma(c,\epsilon, n) = \left[1 + \left(\frac{c}{\epsilon} \right)^n \right]^{-1}.
\end{equation}
\end{Def}

\section{Generalization of Perelman's Lemma} \label{Section-Generalizations-Perelman}
\begin{Lem} \label{Prop-General Perelman}
Let $(Y, p)$ be the pointed metric measure limit of a sequence of Riemannian manifolds $\{(M^n_i, p_i)\}_{i=1}^{\infty}$ with $\Ric_{M^n_i} \geq 0$ and assume that $\alpha_Y > 1 - \gamma(c_1, \epsilon, n)$ for some $c_1 >1, \epsilon >0$. Then for any $a \in B_{p}(R)$, $R>0$, there exists a point $q \in Y \setminus B_{p}(c_1R)$ such that $d_{Y}(a, \overline{pq}) \leq \epsilon R$, where $\overline{pq}$ is a minimizing limit geodesic in $Y$ connecting $p$ and $q$.
\end{Lem}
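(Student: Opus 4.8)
The plan is to reduce this limit-space statement to the Riemannian version of Perelman's Maximal Volume Lemma (stated in \cite{Munn}, c.f. \cite{Pe}) by working in the approximating manifolds $M^n_i$ and passing to the limit. The key observation is that all the hypotheses are phrased in terms of the renormalized limit measure $\nu$ and limit geodesics, and Theorem \ref{Theorem-ChCo limit measure} tells us precisely how $\nu$ is the limit of the renormalized Riemannian volumes. So I would first unpack the volume-growth hypothesis $\alpha_Y > 1 - \gamma(c_1,\epsilon,n)$ into a statement about $\nu(B_p(r))/(\omega_n r^n)$ for large $r$, and then transfer that bound back to $\Vol_{M_i}(B_{p_i}(r))/(\omega_n r^n)$ for $i$ large using the convergence in Theorem \ref{Theorem-ChCo limit measure}, keeping in mind the monotonicity of the volume ratio guaranteed by the Bishop-Gromov comparison in (\ref{renormalized vol comparison}).

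\medskip

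First I would fix $a \in B_p(R)$ and, using an almost isometry $f_i : B_{p_i}(L) \to Y$ from Definition \ref{Def-pted GH convergence} (with $L$ chosen large enough to contain everything in play, i.e.\ $L > c_1 R$), pull $a$ back to a sequence of points $a_i \in M^n_i$ with $f_i(a_i) \to a$ and $d_{M_i}(p_i, a_i) \to d_Y(p,a) < R$. Next I would apply the Riemannian Perelman Maximal Volume Lemma on each $M^n_i$: since the volume growth of $M^n_i$ is at least $1 - \gamma(c_1,\epsilon,n)$ for $i$ large (by the transfer argument above), the lemma produces for each such $i$ a point $q_i \in M^n_i \setminus B_{p_i}(c_1 R)$ with $d_{M_i}(a_i, \overline{p_i q_i}) \leq \epsilon R$, where $\overline{p_i q_i}$ is a minimizing geodesic in $M^n_i$. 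The final step is to pass to the limit: after passing to a subsequence, the points $q_i$ converge (via the almost isometries) to some $q \in Y$, and the minimizing geodesics $\overline{p_i q_i}$ converge to a limit geodesic $\overline{pq}$ in the sense of Definition \ref{def-limit geodesic}. Because the almost isometries distort distances by at most $\eta \to 0$, the distance estimates $d_Y(q,p) \geq c_1 R$ and $d_Y(a, \overline{pq}) \leq \epsilon R$ survive in the limit.

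\medskip

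The main obstacle, and the step deserving the most care, is the passage from the Riemannian minimizing geodesics $\overline{p_i q_i}$ to a genuine \emph{limit} minimizing geodesic $\overline{pq}$ in $Y$, together with the claim that the distance-to-geodesic estimate is preserved. Two subtleties arise here. First, one must ensure the points $q_i$ do not escape to infinity in an uncontrolled way before taking the limit; this is handled by noting that the Riemannian lemma can be arranged to place $q_i$ at a controlled distance (comparable to $c_1 R$) from $p_i$, so that after passing to a subsequence the $d_{M_i}(p_i, q_i)$ converge and the $q_i$ remain in a fixed large ball where the almost isometries are defined. Second, one must verify that the Arzel\`a--Ascoli-type limit of the geodesics $\overline{p_i q_i}$ is realized as a limit geodesic in the precise sense of Definition \ref{def-limit geodesic} and not merely some abstract minimizing path in $Y$ --- but this is exactly the content of Gromov's construction recalled before Theorem \ref{Thm-classic Bishop Gromov}, namely that minimizing geodesics between limit points arise as limits of minimizing geodesics in the sequence. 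With these two points addressed, the distance inequality $d_Y(a,\overline{pq}) \leq \epsilon R$ follows by combining the Riemannian estimate $d_{M_i}(a_i,\overline{p_i q_i}) \leq \epsilon R$ with the uniform distance distortion bound of the almost isometry and letting $\eta \to 0$.
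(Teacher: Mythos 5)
Your overall architecture (pull $a$ back to $a_i$, transfer the volume hypothesis, apply Perelman's Riemannian lemma to get $q_i$ and $\overline{p_i q_i}$, control the $q_i$ in a fixed annulus, and pass to a limit geodesic) matches the paper's proof, and your handling of the limit passage --- including keeping $q_i$ at controlled distance from $p_i$ and realizing $\overline{pq}$ as a genuine limit geodesic --- is exactly what the paper does. However, there is one genuine gap at the crucial transfer step. You assert that ``the volume growth of $M^n_i$ is at least $1-\gamma(c_1,\epsilon,n)$ for $i$ large,'' i.e.\ that the \emph{asymptotic} invariant $\alpha_{M_i} = \lim_{r\to\infty}\Vol_{M_i}(B_{p_i}(r))/(\omega_n r^n)$ inherits the bound from $\alpha_Y$. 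This is false in general: the convergence in Theorem \ref{Theorem-ChCo limit measure} gives $\Vol_{M_i}(B_{p_i}(r)) \to \nu(B_p(r))$ only for each \emph{fixed} radius $r$, not uniformly in $r$, so the order of limits cannot be interchanged. The manifolds $M^n_i$ may even be compact (so $\alpha_{M_i}=0$) while converging to a noncompact $Y$ with $\alpha_Y$ close to $1$, and Example 3.\ref{Ex-MenguyA} of the paper exhibits precisely a sequence with $\alpha_{M^4_i} \leq \alpha(2,4)$ whose blow-up limit satisfies $\alpha_Y > \alpha(2,4)$. Consequently, the version of Perelman's Maximal Volume Lemma whose hypothesis is a bound on the volume growth at infinity simply cannot be invoked on the $M^n_i$.

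The paper's fix, which your proposal is missing, is to appeal to the more general finite-radius form of Perelman's lemma from his original paper \cite{Pe}: with all other conclusions unchanged, it assumes only
\begin{equation*}
\Vol_{M_i}\bigl(B_{p_i}(c_2 R)\bigr) \geq \bigl(1-\gamma(c_1,\epsilon,n)\bigr)\,\omega_n (c_2 R)^n
\end{equation*}
for some fixed $c_2 > c_1 > 1$ (this is the statement in \cite{Munn}, Lemma 1.5, before the final step of letting $c_2 \to \infty$). This hypothesis \emph{does} transfer from the limit: by Bishop--Gromov monotonicity of $r \mapsto \nu(B_p(r))/(\omega_n r^n)$ one has $\nu(B_p(c_2 R)) \geq \alpha_Y\,\omega_n(c_2R)^n > (1-\gamma)\,\omega_n (c_2 R)^n$ with a definite gap, and since only the single fixed radius $c_2 R$ is involved, Theorem \ref{Theorem-ChCo limit measure} lets you approximate $\nu(B_p(c_2R))$ by $\Vol_{M_i}(B_{p_i}(c_2R))$ to within less than that gap for $i$ large. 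With this substitution your argument goes through and coincides with the paper's proof; without it, the application of the Riemannian lemma in your second paragraph rests on an unverifiable (and generally false) hypothesis.
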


The bound on $\alpha_Y$ indicates the sequence is noncollapsing and thus the measures do not need to be renormalized and the volumes of balls in $M^n_i$ converge to balls of the same radius in $Y$.

\begin{proof}
Let $a \in B_{p}(R)$ and choose $\delta >0$ such that $B_{a}(\delta) \subset B_{p}(R)$. By property (3) following Definition \ref{Def-pted GH convergence}, choosing $\eta < \delta /2$, we have, for $i > N_{\eta}$,
\begin{equation}
T_{\eta}(f_i(B_{p_i}(R))) \supset B_{p}(R-\eta).
\end{equation}
Since clearly $a \in B_{p}(R - \delta /2) \subset B_{p}(R - \eta) $, we have $a \in T_{\delta / 2}(f_i(B_{p_i}(R)))$ for $i$ sufficiently large. Letting $\eta \downarrow 0$, we can construct a sequence of points $a_i \in B_{p_i}(R)$ and maps $f_i: B_{p_i}(r) \to Y$ such that $f_i(a_i) \rightarrow a \in Y$. Therefore, $a \in Y$ (and in fact any point in $Y$) can be realized as the limit of a sequence of points in $M^n_i$.

Ultimately, we would like to use Perelman's Maximal Volume Lemma on elements of the limiting sequence to show that the same result holds on the limit space. However, it is possible that the manifolds in the sequence $\{(M^n_i, p_i)\}$ are compact and converge in the metric measure sense to a noncompact $(Y,p)$. With this in mind, it is necessary to appeal to a more general form of Perelman's Maximal Volume Lemma as proved in his original paper \cite{Pe}. With everything else remaining the same, the original statement assumes only $\Vol(B_p(c_2 R)) \geq (1-\gamma)\omega_n r^n$, for some $c_2 > c_1 > 1$, rather than a universal bound on the volume growth. The same proof (see \cite{Munn}, Lemma 1.5) holds with neglecting the final step of allowing $c_2$ to tend to infinity.

By Theorem \ref{Theorem-ChCo limit measure}, for $i$ sufficiently large the volume of balls $B_{p_i}(r) \subset (M^n_i, p_i)$ can be approximated by the volume of balls of the same radius in the limit space $(Y, p)$. That is to say, for any $\varepsilon > 0$, there exists an $N >0$ such that $|\nu(B_p(r)) - \Vol_{M_i}(B_{p_i}(r))| < \varepsilon$ for all $i > N$. Since, $\alpha_Y > 1-\gamma(c_1, \epsilon, n))$ and $\frac{\nu(B_p(r))}{\omega_n r^n}$ is nonincreasing as a function of  $r$, it is possible to approximate the volume of balls in the manifolds $M^n_i$ which are sufficiently close to $Y$. Namely, for constants $c_2 > c_1 > 1$ and $i$ sufficiently large,
\begin{eqnarray}
\Vol_{M_i}(B_{p_i}(c_2 R)) & > & \nu(B_{p_i}(c_2 R)) - \varepsilon\\
& > & (1 - \gamma(c_1, \epsilon, n)) \omega_n (c_2 R)^n - \varepsilon.
\end{eqnarray}

Therefore, $\Vol_{M_i}(B_{p_i}(c_2 R))  \geq (1 - \gamma(c_1, \epsilon, n)) \omega_n (c_2 R)^n$ and by Perelman's Maximal Volume Lemma, as originally stated in \cite{Pe} and described above, for each point $a_i \in B_{p_i}(R)$ there exists a point $q_i \in M_i \setminus B_{p_i}(c_1 R)$ such that $d_{M_i}(a_i, \overline{p_i q_i}) < \epsilon R$. Here $d_{M_i}$ denotes the distance function on $M^n_i$ and recall $\overline{ab}$ denotes a minimal geodesic connecting $a$ to $b$. In fact, since the points $q_i$ lie on geodesics emanating from $p_i$, it is possible to find points $q_i \in \overline{B_{p_i}(2R)} \setminus B_{p_i}(R)$ satisfying $d_i(a_i, \overline{p_i q_i}) < \epsilon R$. Again, by the properties of pointed convergence, for all $\eta >0$ and $i$ sufficiently large, there exists a map $f_i: B_{p_i}(R) \to Y$ such that 
$$
d_{GH}(B_{f_i(a_i)}(\epsilon R), B_{a}(\epsilon R)) < \eta.
$$

By controlling the location of the balls $B_{f_i(a_i)}(\epsilon R)$ in relation to the points $a, p \in Y$, it is possible to also control the location of the points $f_j(q_j)$. That is to say, for all $j > i$, the points $\{f_j(q_j)\}$ lie a compact sector of $\overline{B_{p}(2R)} \setminus B_{p}(R)$ and it is possible to extract a convergent subsequence $\{ q_{j_k} \}$ such that $f_{j_k}(q_{j_k}) \rightarrow q \in \overline{B_{p}(2R)} \setminus B_{p}(R) \subset Y \setminus B_{p}(R)$.

The limit space $Y$ is a complete length space; and thus, there exists a minimum length geodesic connecting the points $p$ and $q$, denoted $\overline{pq}$. It remains only to show that this minimal geodesic path lies within $\epsilon R$ of the point $a$. In fact, it is possible to realize this geodesic path in $Y$ as the limit of geodesics $\overline{p_i q_i}$ in $M^n_i$. Furthermore, since each of these paths lies within $\epsilon R$ of the respective points $a_i$, and the points $a_i$ are `converging' to the point $a \in Y$, the limiting geodesic path (after passing to an appropriate subsequence) must also lie with $\epsilon R$ of $a$; that is, $d_{Y}(a, \overline{pq}) \leq \epsilon R$ as required. This completes the proof.
\end{proof}

\subsection{Generalization of the Excess Estimate} \label{Section-Generalizations-AbGr}

Next, we generalize the Abresch-Gromoll excess estimate (\cite{AbGr}, c.f. \cite{Ch})  to metric measure limits of Riemannian manifolds with nonnegative Ricci curvature. In Section \ref{Section-Generalizations-Perelman} we produced a limit geodesic when proving Proposition \ref{Prop-General Perelman}. It is only necessary to prove the excess estimate for small, thin triangles which are formed from limit geodesics.

\begin{Lem}\label{Prop-General AbGr}
Let $(Y, p)$ be the pointed Gromov-Hausdorff limit of a sequence of complete Riemannian manifolds $\{(M^n_i, p_i)\}$ with $\Ric_{M^n_i} \geq 0$; $a, b \in Y$. Define, for any $x \in Y$,
$$
e_{a, b}(x) = d_Y(a,x) + d_Y(b, x) - d_Y(a, b).
$$
Set $s(x) = \min \{d_Y(a, x), d_Y(b,x)\}$ and $h(x) = d_Y(x, \overline{ab})$, where $\overline{ab}$ denotes a limit geodesic in $Y$. If $h(x) \leq s(x)/2$, then
\begin{equation}\label{excess est}
e_{a,b}(x) \leq 8 \left(
\frac{h(x)^n}{s(x)} \right)^{1/{n-1}}.
\end{equation}
\end{Lem}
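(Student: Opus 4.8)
The plan is to reduce inequality (\ref{excess est}) on the limit space to the classical Abresch--Gromoll excess estimate on the approximating manifolds $M^n_i$, where it is available because $\Ric_{M^n_i} \geq 0$, and then to pass to the Gromov--Hausdorff limit. The one structural hypothesis that makes this possible is that $\overline{ab}$ is a \emph{limit} geodesic: by Definition~\ref{def-limit geodesic} there is a sequence of minimizing geodesics $\gamma_i \subset M^n_i$ and almost isometries $f_i$ with $f_i(\gamma_i(t)) = \gamma(t)$, where $\gamma = \overline{ab}$. In particular, the endpoints $a_i, b_i$ of $\gamma_i$ satisfy $f_i(a_i) \to a$ and $f_i(b_i) \to b$.

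First I would pull the point $x$ back to the sequence. Exactly as in the opening paragraph of the proof of Lemma~\ref{Prop-General Perelman}, property (3) following Definition~\ref{Def-pted GH convergence} produces approximate preimages $x_i \in M^n_i$ with $f_i(x_i) \to x$. Setting $e^{M_i}(x_i) = d_{M_i}(a_i, x_i) + d_{M_i}(b_i, x_i) - d_{M_i}(a_i, b_i)$, $s_i(x_i) = \min\{ d_{M_i}(a_i, x_i), d_{M_i}(b_i, x_i) \}$, and $h_i(x_i) = d_{M_i}(x_i, \gamma_i)$, the almost-isometry estimate (property (2)) together with $f_i(a_i) \to a$, $f_i(b_i) \to b$, $f_i(x_i) \to x$ yields $e^{M_i}(x_i) \to e_{a,b}(x)$ and $s_i(x_i) \to s(x)$, while a two-sided comparison (detailed below) yields $h_i(x_i) \to h(x)$.

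Next, on each $M^n_i$ the geodesic $\gamma_i$ is minimizing and $\Ric_{M^n_i} \geq 0$, so the classical Abresch--Gromoll estimate (\cite{Munn}, Theorem~1.3; c.f. \cite{AbGr, Ch}) applies whenever $h_i(x_i) \leq s_i(x_i)/2$, giving exactly
\begin{equation*}
e^{M_i}(x_i) \leq 8 \left( \frac{h_i(x_i)^n}{s_i(x_i)} \right)^{1/(n-1)}.
\end{equation*}
If $h(x) < s(x)/2$ strictly, then the convergences above force $h_i(x_i) \leq s_i(x_i)/2$ for all large $i$, so this inequality holds for such $i$; letting $i \to \infty$ and using continuity of both sides in their arguments gives (\ref{excess est}). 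The boundary case $h(x) = s(x)/2$ then follows by continuity, taking a limit of the strict case.

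The two-sided comparison establishing $h_i(x_i) \to h(x)$ is where I expect the only real subtlety, and it is the point at which the limit-geodesic hypothesis is essential. For the upper bound, if $z = \gamma(t)$ realizes $h(x) = d_Y(x, z)$, then $\gamma_i(t)$ is an admissible competitor and property (2) gives $h_i(x_i) \leq d_{M_i}(x_i, \gamma_i(t)) \to d_Y(x, \gamma(t)) = h(x)$. For the lower bound, if $y_i \in \gamma_i$ realizes $h_i(x_i)$, then $f_i(y_i) = \gamma(t_i) \in \overline{ab}$, so $h(x) \leq d_Y(x, f_i(y_i)) \leq d_Y(x, f_i(x_i)) + d_Y(f_i(x_i), f_i(y_i))$, and property (2) together with $f_i(x_i) \to x$ bounds the right-hand side by $h_i(x_i) + o(1)$. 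Combining the two bounds gives the claim. Note that the essential use of $\Ric \geq 0$ occurs only inside the Riemannian estimate on the $M^n_i$; on $Y$ the inequality is purely a limit of those estimates, which is precisely why, as remarked before the statement, it suffices to treat the thin triangles built from limit geodesics.
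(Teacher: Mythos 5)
Your proposal is correct and follows essentially the same route as the paper: pull $x, a, b$ back to points $x_i, a_i, b_i$ in the manifolds via the almost isometries, apply the classical Abresch--Gromoll estimate on each $M^n_i$ (using $\Ric_{M^n_i} \geq 0$ and the fact that $\overline{ab}$ is a limit geodesic so that $h(x_i) \to h(x)$), and pass to the limit. In fact you are slightly more careful than the paper at two points it glosses over --- the explicit two-sided comparison proving $h_i(x_i) \to h(x)$, and the verification that the hypothesis $h_i(x_i) \leq s_i(x_i)/2$ holds for large $i$ (handling the boundary case $h(x) = s(x)/2$ by continuity) --- both of which are legitimate and welcome additions.
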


\begin{proof}
Let $\epsilon >0$ and choose $0 < \eta < \epsilon/3$. Given $x, a, b \in (Y,p)$, let $x_i, a_i, b_i \in M^n_i$ be points in the sequence of manifolds that converge to $x, a, b $ (resp.) in the limit. By property (2) following the definition of pointed Gromov-Hausdorff convergence, there exists a constant $N_{\eta} >0$ such that for all $r>0$ and $i > N_{\eta}$, there is a map $f_i: B_{p_i}(r) \rightarrow Y$ such that
$$
\sup_{x_1, x_2 \in B_{p_i}(r)}|d(f_i(x_1), f_i(x_2))- d(x_1, x_2)| < \eta.
$$

\noindent This implies that, for any $\epsilon >0$,
\begin{equation}
|e_{a,b}(x) - e_{a_i, b_i}(x_i)| < 3 \eta < \epsilon,
\end{equation}
for $i$ sufficiently large, $i > N_{\eta}$. Furthermore, each element of the sequence $\{(M^n_i, p_i)\}$ has $\Ric_{M_i} \geq 0$ and so by the Abresch-Gromoll excess estimate for $M^n_i$, we find that $e_{a,b}(x) < e_{a_i, b_i}(x_i) + \epsilon \leq 8 \left(\frac{h^n(x_i)}{s(x_i)} \right)^{1/{n-1}} + \epsilon$.

Note that $s(x_i) \rightarrow s(x)$ and since we required the geodesic $\overline{ab}$ is a limit geodesic of $Y$, we also have (after passing to a subsequence if necessary) $h(x_i) \rightarrow h(x)$. Thus, for any $\epsilon' >0$,

\begin{equation}
e_{a,b}(x) < 8 \left(
\frac{h(x)^n}{s(x)} \right)^{1/{n-1}} + \epsilon'.
\end{equation} 

Since $\epsilon' > 0$ was arbitrary, (\ref{excess est}) follows and the proof is complete.
\end{proof}

\section{Proof of Theorem \ref{Thm--Main limit theorem}}\label{Section-Proof of limit theorem}
In \cite{Munn}, we use the Homotopy Construction Theorem (\cite{Munn}, Theorem 2.7) to show that $\pi_k(M^n)=0$ in Riemannian manifolds $M^n$ with nonnegative Ricci curvature and sufficiently large volume growth. In fact, the Homotopy Construction Theorem (HCT) holds for a much larger class of spaces; namely, complete, locally compact metric spaces, and thus we can also  apply it in the limit space $(Y,p)$ to show $\pi_k(Y,p)=0$. We re-state the HCT here and refer the reader to \cite{Munn} for the complete proof.

\begin{thm}{\em \textbf{(Homotopy Construction Theorem).}} \label{Theorem-HomotopyConstruction}
Let $Y$ be a complete, locally compact metric space, $p \in
Y$, $R>0$ and $f: \s^k \to B_p(R) \subset Y$ a continuous map. Given
constants $c>1$, $\omega \in (0,1)$, and a sequence of finite cell
decompositions $K_j$ of $\D^{k+1}$ with maps $f_j: \skel_k(K_j) \to Y$ satisfying
the following three properties\\
(A) $K_{j+1}$ is a subdivision of $K_j$ and $f_{j+1}
\equiv f_j$ on $K_j$ and $\max\{\diam(\sigma) | \sigma \in K_j\} \to 0$,\\
(B) For each $(k+1)$-cell, $\sigma \in K_j$, there exists a point
$p_{\sigma} \in B_p(cR) \subset Y$ and a constant $R_{\sigma} >0$
such that
$$f_j(\del \sigma) \subset B_{p_{\sigma}}(R_{\sigma});$$
and, if $\sigma' \subset \sigma$, where $\sigma'\in K_{j+1}$,
$\sigma \in K_j$, then $$B_{p_{\sigma'}}(cR_{\sigma'}) \subset
B_{p_{\sigma}}(cR_{\sigma}), \quad \textrm{and }~ R_{\sigma'} \leq
\omega R_{\sigma}, \textrm{for } \omega \in(0,1).$$ (C)
$\skel_k(K_0) = \s^k = \del \D^{k+1}$, $p_{\sigma_0}=p$, and
$R_{\sigma_0} = R$,\\ then the map $f$ can be continuously extended to a map $g:\D^{k+1} \to B_p(cR) \subset Y$. 
\end{thm}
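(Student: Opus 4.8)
The plan is to build the extension $g$ directly out of the compatible family $\{f_j\}$, defining it cell by cell, and then to upgrade the resulting map on the union of the $k$-skeleta to a genuinely continuous map on all of $\D^{k+1}$ by exploiting the geometric decay of the radii $R_\sigma$ furnished by hypothesis (B). First I would define $g$ on the set $S = \bigcup_j \skel_k(K_j)$. Because $K_{j+1}$ refines $K_j$ and $f_{j+1} \equiv f_j$ on $\skel_k(K_j)$, the value $f_j(x)$ is independent of $j$ once $x \in \skel_k(K_j)$, so setting $g(x) := f_j(x)$ gives a well-defined map on $S$; on $\s^k = \skel_k(K_0)$ it agrees with $f$ by (C), which is the boundary condition we want.

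Next I would extend $g$ to the remaining points. A point $x \notin S$ lies, for every $j$, in the interior of a \emph{unique} top-dimensional cell $\sigma_j(x) \in K_j$, and these nest: $\sigma_0(x) \supset \sigma_1(x) \supset \cdots$ with each $\sigma_{j+1}(x)$ a refinement of $\sigma_j(x)$. Iterating the bound $R_{\sigma'} \le \omega R_\sigma$ from (B), starting from $R_{\sigma_0}=R$ in (C), gives geometric decay $R_{\sigma_j(x)} \le \omega^j R$; and the nesting $B_{p_{\sigma'}}(cR_{\sigma'}) \subset B_{p_\sigma}(cR_\sigma)$ forces $d(p_{\sigma_{j+1}(x)}, p_{\sigma_j(x)}) < cR_{\sigma_j(x)} \le c\omega^j R$. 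Hence $\{p_{\sigma_j(x)}\}$ is Cauchy, and here is the one place where completeness of $Y$ is essential: I would define $g(x)$ to be its limit. A short check confirms consistency with the definition on $S$, since whenever $x \in \del\sigma$ one has $g(x) = f_j(x) \in B_{p_\sigma}(R_\sigma)$ while the enclosing balls shrink to $g(x)$.

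The crux is continuity, and I would route everything through the single estimate
\[
g(\overline{\sigma}) \subseteq \overline{B_{p_\sigma}(cR_\sigma)} \qquad \text{for every } (k+1)\text{-cell } \sigma \in K_j.
\]
This is proved by distinguishing boundary points of $\sigma$, where $g = f_j$ lands in $B_{p_\sigma}(R_\sigma) \subset B_{p_\sigma}(cR_\sigma)$ because $c>1$, from interior points, where one passes to the nested subcells inside $\sigma$ and uses that all their enlarged balls lie inside $B_{p_\sigma}(cR_\sigma)$ by the nesting clause of (B). Granting the displayed estimate, continuity at any $x$ follows by fixing $j$ with $2c\omega^j R < \epsilon$ and using that the union $N$ of the closed top cells of $K_j$ containing $x$ is a neighborhood of $x$ (its complement is a closed union of cells missing $x$): for $y \in N$ the points $x$ and $y$ share a cell $\sigma$, so $d(g(x), g(y)) \le \diam g(\overline{\sigma}) \le 2cR_\sigma \le 2c\omega^j R < \epsilon$. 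Applying the same estimate to the single top cell $\sigma_0$ of $K_0$, for which $p_{\sigma_0}=p$ and $R_{\sigma_0}=R$, gives $g(\D^{k+1}) \subset \overline{B_p(cR)}$, so $g$ is the required continuous extension.

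The main obstacle is exactly this continuity argument at the points of $\D^{k+1} \setminus S$ that never enter any skeleton: no value of $g$ is prescribed there, so one must simultaneously \emph{produce} a value, via the Cauchy property of the centers and hence completeness, and \emph{control} it uniformly against nearby skeleton values. The geometric decay $R_{\sigma_j(x)} \le \omega^j R$ together with the nesting of the enlarged balls is precisely what makes the star estimate uniform. The only cosmetic point to watch is open versus closed balls in the final containment, which is harmless, since $\overline{B_p(cR)}$ sits inside any marginally larger ball and the hypotheses may be invoked with a hair of extra room.
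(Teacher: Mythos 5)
Your proof is correct and follows essentially the same route as the proof this paper relies on: the paper does not prove the Homotopy Construction Theorem itself but refers to \cite{Munn} (Theorem 2.7), where the extension is likewise defined as $f_j$ on $\bigcup_j \skel_k(K_j)$ and, at the remaining points, as the limit of the Cauchy sequence of centers $p_{\sigma_j(x)}$ obtained from the nesting $B_{p_{\sigma'}}(cR_{\sigma'}) \subset B_{p_{\sigma}}(cR_{\sigma})$ together with the geometric decay $R_{\sigma'} \leq \omega R_{\sigma}$, with continuity coming from the uniform star estimate $\diam g(\overline{\sigma}) \leq 2cR_{\sigma} \leq 2c\omega^j R$ exactly as you argue. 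Your two side remarks are also apt: the open-versus-closed ball discrepancy is genuinely cosmetic (the hypotheses are always invoked with slack in $c$), and it is worth noting that your argument uses only completeness of $Y$, not local compactness, which enters elsewhere in the paper's framework rather than in this theorem.
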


To apply the Homotopy Construction Theorem in the limit space $(Y,p)$, we must describe a sequence of cell decompositions $K_j$ and maps $f_j$ defined on the $k$-skeletons of $K_j$ which satisfy the conditions (A), (B) and (C) above. To define $K_j$, we can use the same cell decompositions of $\D^{k+1}$ as were used in the Riemannian case. The only subtlety arises in creating the maps $f_j$. In the Riemannian case, these are constructed using a Moving In Lemma (\cite{Munn}, Lemma 3.5; \cite{Pe}, Statement (C)). As stated in \cite{Munn} and \cite{Pe}, this lemma requires nonnegative Ricci curvature and a volume growth lower bound in $M^n$ in order to apply Perelman's Maximal Volume Lemma and the Abresch-Gromoll excess estimate. However, since we have generalized versions of these lemmas for the limt space $(Y, p)$, we can prove an analog of the Moving In Lemma for the limit space $(Y, p)$.


To state the Moving In Lemma for Gromov-Hausdorff limits, recall the definition for the volume growth constant $\beta(k,c,n)$ which we defined in \cite{Munn}.

\begin{Def} \label{Def-beta volume growth}
For constants, $c>1$ and $k,n \in \N$, the value of $\beta(k,c,n)$
represents a minimum volume growth necessary to guarantee that any
continuous map $f: \s^k \to B_p(R)$ has a continuous extension $g:
\D^{k+1} \to B_p(cR)$. Define
\begin{eqnarray}
\beta(k,c,n) &= \max\{&1-\gamma(c,h^{-1}_{k,n}(c), n);
\label{Def-AbGr for
beta}\\
&& \beta(j, 1 + \frac{h^{-1}_{k,n}(c)}{2k}, n), j=1,..,k-1
\label{Def-Main(k-1) for beta}\},
\end{eqnarray}
where $\beta(0,c,n)=0$ for any $c$ and $\beta(1,c,n) = 1-\gamma(c,
h^{-1}_{1,n}(c),n)$.
\end{Def}

\noindent The definition of $\gamma(c,h^{-1}_{k,n}(c), n)$ is given in (\ref{Def--gamma}) and the function $h_{k,n}$ is defined in Section 3 of \cite{Munn}. Recall, the function $h_{k,n}$ is a smooth, one-to-one, onto increasing function which relates the constant $c>1$ and a small constant $d_0>0$. The constant $c>1$ denotes the location of the homotopic extension coming from the HCT and the constant $d_0$ describes the location of the image of the map we achieve from the Moving In Lemma. More is said about the nature of these two constants and how they are related in the discussion in Section 3 of \cite{Munn}. Given $k, n \in \mathbb{N}$, set $h_{k,n}(d_0) = c$. The coefficients of the function $h_{k,n}$ are defined iteratively and we verify that these coefficients (and thus the function as well) are optimal in the Appendix of \cite{Munn}.

\noindent We have

\begin{Pro} \emph{\textbf{(Moving In Lemma for GH limit)}.} \label{Lemma-Moving In for limit}
Let $(Y,p)$ be the pointed Gromov-Hausdorff limit of a sequence of Riemannian manifolds $\{(M^n_i, p_i)\}$ 
with $\Ric_{M_i} \geq 0$ and let $\nu$ denote the renormalized limit measure of $Y$. For a small constant $d_0 >0$ and $k,n \in \mathbb{N}$, if
\begin{equation}
 \alpha_Y \geq \beta(k, h_{k,n}(d_0), n)
\end{equation}
then given $q \in (Y,p)$, $\rho >0$, a continuous map $\phi: \s^k
\rightarrow B_q(\rho)$ and a triangulation $T^k$ of $\s^k$ such that
$\diam(\phi(\Delta^k)) \leq d_0 \rho$ for all $\Delta^k \in T^k$,
there exists a continuous map $\tphi:\s^k \rightarrow
B_q((1-d_0)\rho)$ such that
\begin{equation}\label{diam conclusion from moving in}
\diam(\phi(\Delta^k) \cup \tphi(\Delta^k)) \leq 10^{-k-1} \left(1+
\frac{d_0}{2k}\right)^{-k} (1-h_{k,n}(d_0)^{-1}) \rho.
\end{equation}
\end{Pro}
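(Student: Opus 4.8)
The plan is to transplant the proof of the Moving In Lemma from \cite{Munn} essentially verbatim, replacing the two places where Riemannian geometry enters --- Perelman's Maximal Volume Lemma and the Abresch--Gromoll excess estimate --- with their limit-space generalizations, Lemma \ref{Prop-General Perelman} and Lemma \ref{Prop-General AbGr}. The argument runs by induction on $k$, and the role of Definition \ref{Def-beta volume growth} is precisely to package the volume-growth thresholds required by these two inputs at every stage of the induction. I would first record the base cases $k=0$ (where $\beta(0,c,n)=0$ makes the statement vacuous) and $k=1$ (where $\beta(1,c,n)=1-\gamma(c,h^{-1}_{1,n}(c),n)$ involves only Perelman's Lemma, with no lower-dimensional induction), and then carry out the inductive step for general $k$.

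The first substantive step is to unwind the hypothesis into the form needed by Lemma \ref{Prop-General Perelman}. Writing $c=h_{k,n}(d_0)$, so that $h^{-1}_{k,n}(c)=d_0$, the first term of Definition \ref{Def-beta volume growth} yields
$$
\alpha_Y \;\geq\; \beta(k,c,n) \;\geq\; 1-\gamma(c,d_0,n),
$$
which is exactly the volume-growth requirement of Lemma \ref{Prop-General Perelman} with $c_1=c$ and $\epsilon=d_0$. The remaining terms $\beta(j,1+\tfrac{d_0}{2k},n)$, $j=1,\dots,k-1$, are designed so that the inductive form of the present Proposition is available on the lower skeleta with the shifted constant $1+\tfrac{d_0}{2k}$.

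I would then build $\tphi$ over the triangulation $T^k$ skeleton by skeleton. For each vertex $v$, apply Lemma \ref{Prop-General Perelman} at center $q$ to $\phi(v)\in B_q(\rho)$: this produces a point $q_v\in Y\setminus B_q(c\rho)$ together with a minimizing \emph{limit} geodesic $\overline{q\,q_v}$ passing within $d_0\rho$ of $\phi(v)$. Defining $\tphi(v)$ by sliding $\phi(v)$ radially inward along $\overline{q\,q_v}$ places it in $B_q((1-d_0)\rho)$, and the displacement $d_Y(\phi(v),\tphi(v))$ is controlled by applying Lemma \ref{Prop-General AbGr} to the thin triangle with vertices $q$, $q_v$, $\phi(v)$, for which $h(\phi(v))\leq d_0\rho$ and $s(\phi(v))$ is comparable to $\rho$ (the finitely many vertices too close to $q$ to satisfy $h\leq s/2$ already lie deep inside the target ball and need no appreciable moving, exactly as in \cite{Munn}). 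Extending $\tphi$ across each simplex by coning toward $q$ along these geodesics, and filling the lower-dimensional faces by the inductive hypothesis with constant $1+\tfrac{d_0}{2k}$, produces a continuous map; the accumulated per-dimension contraction is what manifests as the factor $10^{-k-1}(1+\tfrac{d_0}{2k})^{-k}$ in (\ref{diam conclusion from moving in}).

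The step I expect to be the genuine obstacle --- the only point where the limit setting is not formally identical to \cite{Munn} --- is the compatibility between the two generalized lemmas: Lemma \ref{Prop-General AbGr} is valid only for triangles whose longest side is a \emph{limit} geodesic, whereas in the smooth setting any minimizing geodesic would do. This is precisely why I arrange every geodesic entering an excess estimate to be one produced by Lemma \ref{Prop-General Perelman}, which outputs limit geodesics by construction; the coning and interpolation over simplices must then be organized so that the excess bound is only ever invoked on such triangles, and never on an arbitrary minimizing geodesic of $Y$. Once this is secured, the remaining work is the constant bookkeeping that aggregates the vertex-displacement bounds from (\ref{excess est}) across the $k$ inductive stages into the precise coefficient $10^{-k-1}(1+\tfrac{d_0}{2k})^{-k}(1-h_{k,n}(d_0)^{-1})$ of (\ref{diam conclusion from moving in}); since Lemmas \ref{Prop-General Perelman} and \ref{Prop-General AbGr} reproduce the two geometric inputs with the identical constants used in \cite{Munn}, this arithmetic carries over unchanged and the stated bound follows.
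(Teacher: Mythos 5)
Your proposal follows essentially the same route as the paper's own (sketched) proof: the paper likewise transplants the argument of \cite{Munn} verbatim, replacing Perelman's Maximal Volume Lemma with Lemma \ref{Prop-General Perelman} and the Abresch--Gromoll excess estimate with Lemma \ref{Prop-General AbGr}, and it flags exactly the point you isolate as the genuine obstacle, namely that the generalized excess estimate applies only to limit geodesics, which are precisely those produced by the generalized Perelman lemma. Your additional detail on the base cases, the unwinding of $\beta(k,h_{k,n}(d_0),n)$ via Definition \ref{Def-beta volume growth}, and the skeleton-by-skeleton construction is consistent with the construction in \cite{Munn} to which the paper defers.
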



\noindent {\em Sketch of proof.}  The original idea and proof of the Moving In Lemma for Riemannian manifolds is due to Perelman (\cite{Pe}, Statement C). In \cite{Munn} we extend Perelman's result to determine precise constants for the volume growth which describe how varioius homtopy dimensions are influenced as the volume growth increases. The analysis to determine these precise bounds for the volume growth of the Riemannian manifold is given in great detail in \cite{Munn}. Here we show that the result of the Moving In Lemma can be extended to Gromov-Hausdorff limits assuming a similar volume growth bound is obeyed in the limit $(Y,p)$.

The proof of the Moving In Lemma for GH limits mirrors the proof of the Moving In Lemma for Riemannian manifolds which can be found in Section 3 of \cite{Munn}. The proof is by construction and the map $\tphi$ is built inductively on $i$-dimensional skeletons of the given triangulation of $\s^k$. The key point in the Riemannian case which requires the smooth structure arises in an application of Perelman's Maximal Volume Lemma to create a long well-placed geodesic in $M^n$. One then applies the Abresch Gromoll excess estimate to the long thin triangle made from this geodesic. The proof of the Moving In Lemma for the limit space $(Y,p)$ follows verbatim replacing the original Perelman Maximal Volume Lemma with our generalized version (Lemma \ref{Prop-General Perelman}) and replacing the Abresch-Gromoll excess estimate with our generalized excess estimate as applied to limit geodesics in $(Y,p)$ (Lemma \ref{Prop-General AbGr}). This completes the proof of the Proposition. \hfill $\square$
\\

Before we proceed to the proof of Theorem \ref{Thm--Main limit theorem}, let us recall Perelman's argument describing how to apply the Moving In Lemma to create a homotopic extension of a continuous map $f$ on $\s^k$. Ultimately, this amounts to an application of the Homotopy Construction Theorem. The hypothesis of the Homotopy Construction Theorem requires a sequence of cell decompositions $K_j$ of $\D^{k+1}$ and a sequence of maps $f_k$ defined on the $k$-skeletons of the $K_j$ which satisfy the conditions (A), (B), and (C). Following \cite{Pe}, take any $(k+1)$ cell in $K_j$ and express it in polar coordiates as $\s^k \times (0,1] \cup \{0\}$. Let $T^k$ be a triangulation of $\s^k$ (satisfying the condition of the Moving In Lemma) and decompose this cell into components so that the original cell intersects the $k$-skeleton of the decomposition at 

\begin{equation}\label{cell decomp}
 \s^k \times \{1\}~ \cup~ \s^k \times \{\dfrac{1}{2}\}~ \cup ~ \skel_{k-1}(T^k) \times [\dfrac{1}{2}, 1].
\end{equation}

This process can be repeated on smaller and smaller scales so that as $j \to \infty$ the $k$-skeleton of $K_j$ creates a very fine net filling in $\D^{k+1}$. We use the Moving In Lemma to define a sequence of continuous maps $f_j$ on the $\skel_j(K_j)$ for each $j$. We must define $f_{j+1}$ on the three components of (\ref{cell decomp}) above. Set $f_{j+1} \equiv f_j$ on $\s^k \times \{1\}$. Applying the Moving In Lemma (taking $f_{j}$ for $\phi$), set $f_{j+1} = \tilde{f_j}$ on $\s^k \times \{\dfrac{1}{2}\}$. Lastly, using induction and assuming that lower dimensional maps can be extended across $\s^{i}$ for $i = 0, ..., k-1$, we can fill in the maps $f_{j+1}$ consecutively across $\skel_i(T^k) \times [1/2, 1]$ for $i = 0, ..., k-1$. This construction of cell decompositions $K_j$ and maps $f_j$ satisfies the hypothesis of the Homotopy Construction Theorem and thus a homotopic extension of the map $f$ exists.

We can use precisely the same idea in $(Y,p)$ to construct a homotopic extension in the limit space as well. The only necessary tools were the Moving In Lemma and the Homotopy Construction Theorem. Since the Homotopy Construction Theorem holds for any complete, locally compact metric space and Proposition \ref{Lemma-Moving In for limit} provides an analog of the Moving In Lemma for GH limits, we can now prove Theorem \ref{Thm--Main limit theorem}.
\\

\begin{proof}
As in \cite{Munn}, we want to show that a $k$-sphere is contractible in the limit space. We proceed by repeatedly applying the Moving In Lemma for GH limits keeping track of the constants until we satisfy the conditions of the Homotopy Contstruction Theorem. Since the Homotopy Construction Theorem holds for any complete locally compact length space, it certainly holds in our limit space $(Y,p)$. Furthermore, the values we found for $\alpha_M$ which guarantee $\pi_k(M^n) = 0$ in the Riemannian setting are precisely the bounds necessary to meet the hypothesis of the Moving In Lemma for GH limits and thus ensure $\pi_k(Y) = 0$ for the limit space. This proves Theorem \ref{Thm--Main limit theorem}.
\end{proof}

In \cite{Munn} we extend Perelman's work by carefully analyzing the nature of the expression $\beta(.,.,.)$ to determine explicit values for the $\alpha(k,n)$ of Theorem \ref{Thm--Main limit theorem}. To determine the optimal bound for the volume growth (as determined via this method) set
\begin{equation}
 \alpha(k,n) = \inf_{c \in (1, \infty)} \beta(k,c,n).
\end{equation}

The expression for $\beta(k,c,n)$ is iterative and the number of terms in the maximum increases as $2^{k-1}$. However, we verify in \cite{Munn} that the leading term $1 - \gamma(c, h^{-1}_{k,n}(c), n)$ in fact dominates the maximum of the collection. Thus, by examining the behavior of $\gamma(c, h^{-1}_{k,n}(c), n)$ as a function of $c$, we are able to to extract precise constants $\alpha(k,n)$ for $\alpha_M$ which guarantee the $k$th homotopy group of $M^n$ is trivial. The same constants produce an equivalent outcome for the homotopy groups of the GH limit space $(Y,p)$.

In the table below we indicate the values for $\alpha(k,n)$ for $1 \leq k \leq 3,~ 1 \leq n \leq 10$. The explicit form for higher dimensional $\alpha(k,n)$ can be found in \cite{Munn}.

\begin{table}[!h]
\caption{\small{Table of $\alpha(k,n)$ values for $1 \leq k \leq 3, 1 \leq n \leq 10$}}
\label{Table-alphas}
\begin{center}
\begin{tabular}{|c||c|c|c|}
  \hline
   & $k=1$ & $k=2$ & $k=3$ \\ \hline \cline{1-4}
  $n=1$ & $1 - 1.04 \times 10^{-5}$ & - & - \\ \hline
  $n=2$ & $1 - 1.65 \times 10^{-14}$ & $1 - 7.05 \times 10^{-44}$ & - \\ \hline
  $n=3$ & $1 - 3.95 \times 10^{-28}$ & $1 - 1.13 \times 10^{-91}$ & $1 - 2.06 \times 10^{-289}$ \\ \hline
  $n=4$ & $1 - 3.02 \times 10^{-46}$ & $1 - 1.23 \times 10^{-178}$ & $1 - 9.30 \times 10^{-734}$ \\ \hline
  $n=5$ & $1 - 7.46 \times 10^{-69}$ & $1 - 5.61 \times 10^{-309}$ & $1 - 9.16 \times 10^{-1583}$ \\ \hline
  $n=6$ & $1 - 5.94 \times 10^{-96}$ & $1 - 1.01 \times 10^{-491}$ & $1 - 2.57 \times 10^{-3035}$ \\ \hline
  $n=7$ & $1 - 1.53 \times 10^{-127}$ & $1 - 6.66 \times 10^{-736}$ & $1 - 1.18 \times 10^{-5330}$ \\ \hline
  $n=8$ & $1 - 1.27 \times 10^{-163}$ & $1 - 1.50 \times 10^{-1050}$ & $1 - 2.33 \times 10^{-8748}$ \\ \hline
  $n=9$ & $1 - 3.40 \times 10^{-204}$ & $1 - 1.07 \times 10^{-1444}$ & $1 - 2.28 \times 10^{-13608}$ \\ \hline
  $n=10$ & $1 - 2.95 \times 10^{-249}$ & $1 - 2.24 \times 10^{-1927}$ & $1 - 5.70 \times 10^{-20271}$ \\ \hline
\end{tabular}
\end{center}
\end{table}

\vspace{5mm}
\singlespacing
\noindent
Michael Munn \\
Mathematics Institute\\
University of Warwick\\
Coventry, UK\\
e-mail: mikemunn@gmail.com

\end{document}